\pdfoutput=1
\documentclass[a4paper, 11pt, reqno]{amsart}

\usepackage{amssymb}
\usepackage{amsmath}
\usepackage{amsthm}
\usepackage[numbers]{natbib}
\usepackage{mathtools}
\usepackage{enumitem}
\usepackage{calc}
\usepackage{setspace}
\usepackage{graphicx}
\usepackage{tikz-cd}
\usepackage{hyperref}
\usepackage[top=3 cm, bottom=3 cm, left=3 cm, right=3cm]{geometry}
\onehalfspace

\newcounter{dummy}
\numberwithin{dummy}{section}
\newtheorem{thm}[dummy]{Theorem}
\newtheorem{defn}[dummy]{Definition}
\newtheorem{conj}[dummy]{Conjecture}
\newtheorem{lem}[dummy]{Lemma}
\newtheorem{prop}[dummy]{Proposition}
\newtheorem{cor}[dummy]{Corollary}
\newtheorem{claim}{Claim}
\numberwithin{equation}{section}

\newcommand{\noop}[1]{}

\DeclareMathOperator{\Gal}{\mathrm{Gal}}

\DeclareMathOperator{\tors}{\mathrm{tors}}
\DeclareMathOperator{\der}{\mathrm{der}}
\DeclareMathOperator{\Tr}{\mathrm{Tr}}

\DeclareMathOperator{\GSp}{\mathrm{GSp}}

\newcommand{\ZP}{\text{ZP}}

\makeatletter
\renewcommand{\@biblabel}[1]{[#1]\hfill}
\makeatother

\begin{document}
\title{On the Zilber-Pink conjecture for complex abelian varieties}

\author[F. Barroero]{Fabrizio Barroero}
\address{Universit\`a degli studi Roma Tre, Dipartimento di Matematica e Fisica, Largo San Murialdo 1, 00146 Roma, Italy}
\email{fbarroero@gmail.com}

\author[G. A. Dill]{Gabriel A. Dill}
\address{Universit\"at Basel, Departement Mathematik und Informatik, Spiegelgasse 1, CH-4051 Basel, Switzerland}
\email{gabriel.dill@unibas.ch}

\date{\today}

\begin{abstract}
In this article, we prove that the Zilber-Pink conjecture for abelian varieties over an arbitrary field of characteristic $0$ is implied by the same statement for abelian varieties over the algebraic numbers.

More precisely, the conjecture holds for subvarieties of dimension at most $m$ in the abelian variety $A$ if it holds for subvarieties of dimension at most $m$ in the largest abelian subvariety of $A$ that is isomorphic to an abelian variety defined over $\bar{ \mathbb{Q}}$.

Dans cet article, nous prouvons que la conjecture de Zilber-Pink pour les vari\'et\'es ab\'eliennes sur un corps quelconque de caracteristique $0$ est impliqu\'ee par le m\^eme \'enonc\'e pour les vari\'et\'es ab\'eliennes sur les nombres alg\'ebriques.

Plus pr\'ecis\'ement, la conjecture est vraie pour les sous-vari\'et\'es de dimension inf\'erieure ou \'egale \`a $m$ dans la vari\'et\'e ab\'elienne $A$ si elle est vraie pour les sous-vari\'et\'es de dimension inf\'erieure ou \'egale \`a $m$ dans la plus grande sous-vari\'et\'e ab\'elienne de $A$ qui est isomorphe \`a une vari\'et\'e ab\'elienne d\'efinie sur $\bar{\mathbb{Q}}$.
\end{abstract}

\maketitle

\section{Introduction}

For us, varieties and curves are irreducible and subvarieties are always irreducible and closed in the ambient variety. Fields are always of characteristic 0. We work with the Zariski topology, therefore by open, dense, etc.~we always mean Zariski open, Zariski dense, etc.~except when we consider connected mixed Shimura (sub)varieties or (sub)data.

Let $A$ be an abelian variety defined over an algebraically closed field $K$. A special subvariety of $A$ is an irreducible component of an algebraic subgroup of $A$ or, equivalently, a translate of an abelian subvariety by a torsion point. Arbitrary translates of abelian subvarieties are called cosets or weakly special subvarieties. Special subvarieties are also called torsion cosets.

The Manin-Mumford conjecture, proven by Raynaud \cite{Raynaud}, states that a subvariety of an abelian variety contains at most finitely many maximal special subvarieties. In particular, a non-special curve contains at most finitely many torsion points.

On the other hand, given a curve in an abelian variety, a dimension count suggests that it should not intersect a special subvariety of codimension at least 2. If one considers the union of all special subvarieties of codimension at least 2 and intersects it with a curve that is not contained in a proper special subvariety, one expects the intersection to be finite.

The pioneering work \cite{BMZ99} of Bombieri, Masser and Zannier was one of the first to study this kind of problems and to pass from considering torsion points in subvarieties of algebraic groups to points lying in algebraic subgroups of appropriate codimension.

Indeed, Bombieri, Masser and Zannier proved that, given a curve defined over the algebraic numbers and contained in $\mathbb{G}_m^n$ but not in any of its proper (not necessarily torsion) cosets, it contains at most finitely many points that lie in an algebraic subgroup of $\mathbb{G}_m^n$ of codimension at least 2. The condition of not being contained in a proper coset was replaced by the necessary one of not lying in a proper torsion coset by Maurin \cite{Maurin} and independently by Bombieri, Habegger, Masser and Zannier in \cite{BHMZ}.

In the same paper \cite{BMZ99}, Bombieri, Masser and Zannier suggest that a possible analogue of their result for curves in $\mathbb{G}_m^n$ could hold for (families of) abelian varieties and that one could consider higher dimensional subvarieties and intersect them with algebraic subgroups of higher codimension.

A couple of years later, Zilber \cite{MR1875133} independently stated a conjecture for semiabelian varieties of which the result of Bombieri, Masser and Zannier is a consequence. This is formulated in slightly different language and we are going to state it later. Similar conjectures for $\mathbb{G}_m^n$ were formulated by Bombieri, Masser and Zannier in \cite{BMZ07}.

We now consider an apparently weaker formulation of the same principle due to Pink. We introduce the following notation: For a non-negative integer $k$, we denote by $A^{[k]}$ the union of all special subvarieties of $A$ of codimension at least $k$.

Pink conjectured in \cite{PUnpubl} that, if $V \cap A^{[\dim V +1]}$ is Zariski dense in $V$ for a subvariety $V$ of $A$, then $V$ is contained in a proper special subvariety of $A$. The conjecture in its full generality is still open. If $V$ is a curve and $K = \bar{\mathbb{Q}}$, it has been proven by Habegger and Pila in \cite{MR3552014}. Previously, partial results have been obtained by Viada \cite{MR1990974}, \cite{Viada2008}, R\'emond and Viada \cite{RemVia}, Ratazzi \cite{Ratazzi08}, Carrizosa \cite{MR2473296}, \cite{MR2533796} in combination with R\'emond \cite{MR2198798}, \cite{MR2311666}, \cite{MR2534482}, and Galateau \cite{galateau2010}. If $V$ is a hypersurface, Pink's conjecture follows from the Manin-Mumford conjecture. If the dimension and codimension of $V$ are at least $2$, then all known results place additional restrictions on $V$ or $A$, see for instance \cite{CVV}, \cite{CV}, and \cite{HubVia}.

In this article, we use a recent result of Gao in \cite{G18}, which generalizes work by R\'emond in \cite{MR2534482}, to reduce Zilber's conjecture to the case where everything is defined over $\bar{\mathbb{Q}}$. We even show that it can be reduced to Pink's formulation of the conjecture over $\bar{\mathbb{Q}}$. Furthermore, we prove the full conjecture in Corollary \ref{cor:corollary2} if no abelian variety of dimension greater than $4$ that is defined over $\bar{\mathbb{Q}}$ embeds into $A$. For example, the conjecture holds in a power of an elliptic curve with transcendental $j$-invariant. Combining Theorem \ref{thm:functionfieldzp} below with Theorem 1.1 in \cite{MR3552014} yields the following theorem:

\begin{thm}
Let $A$ be an abelian variety defined over an algebraically closed field $K$ (of characteristic $0$) and let $V \subset A$ be a curve. Then $V \cap A^{[2]}$ is finite unless $V$ is contained in a proper algebraic subgroup of $A$.
\end{thm}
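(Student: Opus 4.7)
The proof is the direct combination of two inputs: Theorem \ref{thm:functionfieldzp}, the main reduction result of this paper, and Theorem 1.1 of \cite{MR3552014} by Habegger and Pila, which settles the curve case of Pink's conjecture over $\bar{\mathbb{Q}}$.

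Concretely, I would proceed in two steps. First, apply Theorem \ref{thm:functionfieldzp} with $m = 1$. Let $A_0 \subseteq A$ be the largest abelian subvariety of $A$ that is isomorphic to an abelian variety defined over $\bar{\mathbb{Q}}$. The reduction then says that to establish Pink's conjecture for all subvarieties of dimension at most $1$ in $A$, it suffices to establish it for all subvarieties of dimension at most $1$ in $A_0$. Second, invoke the Habegger-Pila theorem: since $A_0$ descends to $\bar{\mathbb{Q}}$, their result gives that for any curve $V_0 \subset A_0$, the set $V_0 \cap A_0^{[2]}$ is finite unless $V_0$ is contained in a proper algebraic subgroup of $A_0$. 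Chaining the two implications yields the statement of the theorem.

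A small piece of bookkeeping is to reconcile the ``proper special subvariety'' condition appearing in Pink's original formulation with the ``proper algebraic subgroup'' condition in the present statement. For an irreducible curve $V$ these are equivalent: if $V$ lies in a proper algebraic subgroup $H$, then, because $V$ is irreducible, it lies in a single connected component of $H$, which is a torsion translate of the identity component $H^0$ and hence a proper special subvariety; conversely, a proper special subvariety is by definition contained in a proper algebraic subgroup.

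The only real obstacle is Theorem \ref{thm:functionfieldzp} itself, whose proof is the technical core of the article. Its role here is to bridge the gap between an arbitrary algebraically closed field $K$ of characteristic $0$ and the arithmetic base $\bar{\mathbb{Q}}$; one must show that a curve in $A$ with infinitely many unlikely intersections forces an unlikely-intersection configuration already on the $\bar{\mathbb{Q}}$-descended part $A_0$. This is where the relative Manin-Mumford / Ax-Schanuel type input from Gao's work in \cite{G18}, generalizing R\'emond, enters. Granted that reduction, the present theorem is immediate from the existing literature.
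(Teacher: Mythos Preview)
Your proposal is correct and follows exactly the route the paper indicates: the theorem is stated as an immediate consequence of combining Theorem~\ref{thm:functionfieldzp} (with $m=1$) and Theorem~1.1 of \cite{MR3552014}. One small precision: Theorem~\ref{thm:functionfieldzp} is phrased for Conjecture~\ref{conj:zilberpink} (the optimal-subvariety formulation) rather than for Pink's formulation directly, so strictly speaking one uses the equivalence of the two formulations for curves (noted in the paper just after Conjecture~\ref{conj:zilberpinkatyp}) both to feed Habegger--Pila's result into the reduction and to read off the conclusion; your bookkeeping paragraph on ``proper special subvariety'' versus ``proper algebraic subgroup'' already handles the remaining translation.
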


As mentioned before, Pink's conjecture is implied by the following Conjecture \ref{conj:zilberpinkatyp} on unlikely or atypical intersections that was formulated by Zilber in \cite{MR1875133} for semiabelian varieties. An overview of the topic of unlikely intersections is given in the book \cite{Zannier}.

In order to state Conjecture \ref{conj:zilberpinkatyp}, we introduce the notion of an atypical subvariety: Let $A$ be an abelian variety defined over an algebraically closed field $K$ and let $V$ be a subvariety of $A$. A subvariety $W$ of $V$ is called atypical (for $V$ in $A$) if $W$ is an irreducible component of the intersection of $V$ with a special subvariety of codimension at least $\dim V - \dim W + 1$. It is called maximal if it is not contained in any larger atypical subvariety. 

\begin{conj}\label{conj:zilberpinkatyp}
Let $K$ be an algebraically closed field. Let $A$ be an abelian variety defined over $K$ and let $V$ be a subvariety of $A$. Then $V$ contains at most finitely many maximal atypical subvarieties.
\end{conj}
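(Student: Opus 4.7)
The plan is to mimic the Pila--Zannier strategy in combination with a reduction to the algebraic numbers along the lines of this paper.

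\emph{Stage 1 (spreading and specialization).} Spread $A$ and $V$ over a finitely generated $\bar{\mathbb{Q}}$-subalgebra $R \subset K$, producing an abelian scheme $\mathcal{A} \to S = \Spec R$ with a closed subscheme $\mathcal{V} \subset \mathcal{A}$ whose generic fibre recovers $V \subset A$. For a Zariski-dense set of $\bar{\mathbb{Q}}$-points $s \in S$, I would show that every maximal atypical subvariety $W$ of $V$ in $A$ specialises to a union of atypical subvarieties of $\mathcal{V}_s$ in $\mathcal{A}_s$, and conversely that atypicals in the family do not, generically, become non-atypical after specialization. The key input here is the recent theorem of Gao mentioned in the introduction, which constrains how the lattice of special subvarieties varies across a family; up to enlarging the base, this reduces the problem to Conjecture~\ref{conj:zilberpinkatyp} for the largest isotrivial abelian subvariety of $A$, which is defined over $\bar{\mathbb{Q}}$.

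\emph{Stage 2 (Pila--Wilkie count over $\bar{\mathbb{Q}}$).} Pass to the uniformisation $\pi : \mathbb{C}^g \to A(\mathbb{C})$ and fix a fundamental domain $\mathcal{F}$ definable in the o-minimal structure $\mathbb{R}_{\mathrm{an},\exp}$. Preimages of special subvarieties of $A$ are rational translates of $\mathbb{Q}$-linear subspaces of $\mathbb{C}^g$, so each maximal atypical $W$ determines a semialgebraic block of $\pi^{-1}(V) \cap \mathcal{F}$ containing such a rational flat. The block version of the Pila--Wilkie theorem used in Habegger--Pila then bounds, sub-polynomially in a complexity parameter $T$, the number of such blocks meeting rational flats of complexity $\leq T$.

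\emph{Stage 3 (Galois/height lower bounds).} Each maximal atypical $W$ arises from a torsion translate of an abelian subvariety $B$. Masser--W\"ustholz-type isogeny estimates, together with N\'eron--Tate height lower bounds, should force the Galois orbit of the associated torsion point, hence the complexity of the corresponding rational flat, to grow polynomially in the relevant invariant (order of torsion, degree of $B$). Balancing this against the sub-polynomial count of Stage~2 yields finiteness of the set of maximal atypicals.

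The decisive obstacle is Stage~3: the required polynomial Galois orbit and height estimates for torsion translates of arbitrary abelian subvarieties are currently available only in restricted settings, such as CM abelian varieties and some products of elliptic curves. For this reason I expect that the conjecture cannot be settled unconditionally in full generality with present technology, and that the most that can be achieved in this paper is the reduction effected in Stage~1 together with unconditional consequences in cases where Stages~2--3 are already known (for instance curves, via Habegger--Pila, or abelian varieties of small isotrivial dimension as announced in the introduction).
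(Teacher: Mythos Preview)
This statement is Conjecture~\ref{conj:zilberpinkatyp}, and the paper does \emph{not} prove it; it is stated as an open conjecture. Your closing paragraph correctly identifies this: the obstruction you name in Stage~3 (the unavailability of polynomial Galois-orbit/height lower bounds for optimal singletons in a general abelian variety over $\bar{\mathbb{Q}}$) is precisely why the full conjecture remains open, and indeed the paper makes this explicit by pointing to Corollary~9.10 of \cite{MR3552014}. So there is no proof in the paper to compare your Stages~2--3 against; your assessment that only the reduction step can currently be carried out unconditionally is accurate.

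Your Stage~1 is in spirit the content of Theorem~\ref{thm:functionfieldzp}, but the paper's mechanism differs from the specialization argument you sketch. Rather than specializing to $\bar{\mathbb{Q}}$-points of the base and tracking how maximal atypicals behave fibrewise, the paper first passes to the equivalent formulation via \emph{optimal} subvarieties and defect (Conjecture~\ref{conj:zilberpink}), then realises $A$ up to isogeny as the geometric generic fibre of a pullback of the universal family $\mathfrak{A}_{g,l}\to A_{g,l}$ (Lemma~\ref{lem:finemodulispace}), applies Gao's finiteness (Theorem~\ref{thm:gao}) inside the mixed Shimura variety $(\mathfrak{A}_{g,l})_{\mathbb{C}}$ to pin down the possible $\langle W\rangle_{\bar{\mathbb{Q}},geo}$ up to the trace, and finally runs an induction on the transcendence degree of the field of definition using R\'emond's structure theorem (Proposition~\ref{prop:fieldofdefofcurve}). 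This route avoids the question, which your sketch leaves open, of whether maximality of an atypical component is preserved under specialization to a closed point of $S$; working with the generic fibre and with optimality (which is stable under base change by Lemma~\ref{lem:basechange}) sidesteps that difficulty entirely.
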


If $V$ is a curve, then Conjecture \ref{conj:zilberpinkatyp} and Pink's conjecture are obviously equivalent.

It turns out that another equivalent formulation of Conjecture \ref{conj:zilberpinkatyp} is more suited to our proof strategy. In order to state it, we have to introduce the notions of defect and optimality of a subvariety.

\begin{defn}\label{def:optimal}
 If $V$ is a subvariety of $A$, then there is a smallest special subvariety $\langle V\rangle$ containing $V$. We define the defect $\delta(V)$ of $V$ to be $\dim \langle V \rangle - \dim V$. A subvariety $W$ of $V$ is called optimal for $V$ in $A$ if $\delta(U) > \delta(W)$ for every subvariety $U$ with $W \subsetneq U \subset V$. 
\end{defn}

Pink introduced the notion of defect in \cite{PUnpubl}, while the concept of optimality was introduced in \cite{MR3552014} by Habegger and Pila. The latter is motivated by Poizat's notion of $cd$-maximality in \cite{Poizat}. $cd$-maximality is the toric analogue of the notion of geodesic optimality, which we will introduce later. Using the concept of optimality, Habegger and Pila formulated the following conjecture, which is equivalent to Conjecture \ref{conj:zilberpinkatyp} by Lemma 2.7 in \cite{MR3552014}. 

\begin{conj}\label{conj:zilberpink}
Let $K$ be an algebraically closed field and let $d$ be a non-negative integer. Let $A$ be an abelian variety defined over $K$ and let $V$ be a subvariety of $A$. Then $V$ contains at most finitely many optimal subvarieties of defect at most $d$.
\end{conj}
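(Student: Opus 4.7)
My plan is to attack Conjecture \ref{conj:zilberpink} via the Pila-Zannier strategy, following the template developed by Habegger and Pila for curves in \cite{MR3552014}. As a first move I would invoke the descent result that is the main subject of the present paper, reducing to $K = \bar{\mathbb{Q}}$ so that $A$ and $V$ are defined over a number field. Fix the complex uniformization $\pi \colon \mathbb{C}^g \to A(\mathbb{C})$ with kernel $\Lambda$, together with a bounded semi-algebraic fundamental set $F$ for $\Lambda$. Any optimal subvariety $W \subset V$ of defect at most $d$ is an irreducible component of $V \cap \langle W \rangle$, and $\langle W \rangle$ is, modulo a torsion translation $t$, an abelian subvariety of $A$ corresponding to a $\mathbb{Q}$-rational subspace $U \subset \mathrm{Lie}(A) \cong \mathbb{C}^g$.

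The main steps, in the order I would carry them out, are as follows. First, encode each triple $(W, \langle W \rangle, t)$ as a rational point $q(W)$ in a definable parameter space: the lattice data of $U$ provides coordinates in a Grassmannian of $\mathbb{Q}$-subspaces of $\Lambda \otimes \mathbb{Q}$, while $t$ provides coordinates in $A_{\tors}$, and the defect bound $\dim \langle W \rangle - \dim W \leq d$ cuts out a constructible locus. Second, using $\pi|_F$, which is definable in $\mathbb{R}_{\an,\exp}$, construct a definable set $X$ whose rational points of bounded height contain every $q(W)$ for $W$ optimal of defect $\leq d$. Third, invoke the Ax-Schanuel theorem for abelian varieties (due to Gao, generalising Ax's classical result) to show that any positive-dimensional semi-algebraic block inside $X$ would force the associated $W$ to be contained in a strictly larger subvariety $U' \subset V$ with $\delta(U') \leq \delta(W)$, contradicting the optimality of $W$. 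Fourth, apply the Pila-Wilkie counting theorem: since the algebraic blocks are ruled out by Ax-Schanuel, rational points of height $\leq T$ on the transcendental part of $X$ number at most $O_\varepsilon(T^\varepsilon)$.

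The arithmetic input is the decisive and hardest step. I would need a \emph{large Galois orbits} estimate asserting that $[\bar{\mathbb{Q}}(q(W)) : \bar{\mathbb{Q}}]$ grows at least as a positive power of the height $H(q(W))$, uniformly across optimal $W$. Combined with the Pila-Wilkie bound of the previous step, such an estimate forces the set of $q(W)$ to be finite, hence the set of $\langle W \rangle$ to be finite, and therefore (since each $\langle W \rangle$ contributes boundedly many components of $V \cap \langle W \rangle$) the set of optimal $W$ of defect $\leq d$ to be finite as well.

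The main obstacle is precisely this last step. For curves the required height lower bound is supplied by the work of Masser-W\"ustholz, R\'emond, and others, which is why Habegger and Pila could close the argument in dimension one; for subvarieties of arbitrary dimension with defect bound $d \geq 1$, the requisite uniform lower bounds on Galois orbits and N\'eron-Tate heights are not available, and this is the principal reason Conjecture \ref{conj:zilberpink} remains open beyond curves. Every unconditional case in higher dimension known at present — including those obtained in this paper under hypotheses on the $\bar{\mathbb{Q}}$-subvarieties of $A$ — bypasses the arithmetic step by replacing it with a structural or functional-transcendence argument, and a complete proof along the lines above would require substantial new height lower bounds of Bogomolov-Zhang type adapted to the full Zilber-Pink setting.
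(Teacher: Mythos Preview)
The statement you are attempting to prove is Conjecture~\ref{conj:zilberpink}, and the paper does \emph{not} prove it: it is stated as an open conjecture, and the paper's contribution is to reduce it to the case $K=\bar{\mathbb{Q}}$ (Theorem~\ref{thm:functionfieldzp}) and to settle the cases $m\leq 1$ and $d\leq 1$ (Corollary~\ref{cor:corollary}). There is therefore no proof in the paper to compare your proposal against.

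Your outline is a fair summary of the Habegger--Pila strategy, and you yourself correctly locate the genuine gap: the large Galois orbits lower bound in your ``decisive and hardest step'' is not known for general $V$ and general $d$, and without it the Pila--Wilkie upper bound cannot be played off against anything. This is exactly the reduction recorded in the paper (see the remark after Theorem~\ref{thm:functionfieldzp} citing Corollary~9.10 of \cite{MR3552014}): the full conjecture over $\bar{\mathbb{Q}}$ is known to follow from sufficiently strong Galois lower bounds for optimal singletons, and those bounds are currently unavailable except in the cases that lead to Corollary~\ref{cor:corollary}. So what you have written is a correct conditional programme, not a proof; the gap you name is real and is the reason the conjecture remains open.

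One minor correction: the functional transcendence input you need in step~3 for a fixed abelian variety is essentially Ax's theorem (or the abelian Ax--Lindemann/Ax--Schanuel), not Gao's mixed result; Gao's theorem enters in the paper precisely to handle the \emph{family} situation when performing the descent of Theorem~\ref{thm:functionfieldzp}, which you are taking as a black box. Also, the output of the Ax--Schanuel/o-minimality step in this context is better phrased as the finiteness of the abelian subvarieties arising as $\langle W\rangle_{geo}-\langle W\rangle_{geo}$ for geodesic-optimal $W$ (R\'emond's structure theorem, cf.\ Section~6 of \cite{MR3552014}), rather than directly contradicting optimality; but this does not affect your overall assessment of where the obstruction lies.
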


In the statement of our results, we use the trace of an abelian variety with respect to a field extension of algebraically closed fields. This can be thought of as the largest abelian subvariety defined over the smaller field. See Definition \ref{def:trace} for a formal definition.

The following is the main result of this article:

\begin{thm}\label{thm:functionfieldzp}
	Let $K$ be an algebraically closed field, let $m$ be a non-negative integer and $A$ an abelian variety defined over $K$ with $K / \bar{ \mathbb{Q}}$-trace $(T,\Tr)$. Then, if Conjecture \ref{conj:zilberpink} holds for some non-negative integer $d$ and subvarieties of dimension at most $m$ in $T$ (over the field $\bar{\mathbb{Q}}$), it holds for the same $d$ and subvarieties of dimension at most $m$ in $A$ (over $K$).
	\end{thm}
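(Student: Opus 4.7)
The plan is to prove the theorem by a spreading-out argument combined with the finiteness result of Gao from \cite{G18}. In outline: I spread $A$, $V$, and the hypothetical infinite family of optimal subvarieties over a $\bar{\mathbb{Q}}$-model, invoke Gao's theorem to show that away from a finite exceptional set each such optimal subvariety is governed by a special subvariety of trace type, and then apply the assumed case of Conjecture \ref{conj:zilberpink} for $T$ over $\bar{\mathbb{Q}}$ to conclude.

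To set things up, I fix a subfield $K_0 \subset K$, finitely generated over $\bar{\mathbb{Q}}$, over which $A$, $V$, and the trace morphism $\Tr \colon T_K \to A$ are all defined, together with a smooth irreducible $\bar{\mathbb{Q}}$-variety $S$ with function field $K_0$ such that $A$ extends to an abelian scheme $\pi \colon \mathcal{A} \to S$, $V$ extends to an integral closed subscheme $\mathcal{V} \subset \mathcal{A}$ dominating $S$, and $\Tr$ extends to a homomorphism $T \times_{\bar{\mathbb{Q}}} S \to \mathcal{A}$ of abelian schemes over $S$. After shrinking $S$, I may further assume that every abelian subvariety of $A$ extends uniquely to an abelian subscheme of $\mathcal{A}$, and that the torsion translates appearing as $\langle W \rangle$ likewise spread out to relative torsion translates.

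Now suppose for contradiction that $V$ contains infinitely many pairwise distinct optimal subvarieties $W_1, W_2, \ldots$ of defect at most $d$. For each $i$, the minimal special subvariety $\langle W_i \rangle$ is a torsion translate of an abelian subvariety $B_i \subset A$; these data spread to families over $S$. Here I invoke Gao's theorem from \cite{G18}, which generalises R\'emond's \cite{MR2534482} and furnishes what one may call a trace dichotomy: outside a finite subset of indices, each $B_i$ is isogenous to the image under $\Tr$ of some abelian subvariety $B'_i \subset T_K$. Formulating Gao's statement in the precise form needed — matching optimality of $W_i$ for $V$ in $A$ over $K$ with the atypical-intersection input for the abelian scheme $\mathcal{A}/S$, and showing that the non-isotrivial part of $\mathcal{A}$ can only contribute finitely many $W_i$ of defect $\leq d$ — is the central obstacle of the argument.

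With the dichotomy in hand, I descend each remaining $W_i$ to the trace: using $\Tr$ and passing to a suitable fibre or quotient that kills the non-trace part of $A$, the triple $(W_i, V, \langle W_i\rangle)$ corresponds to a triple $(W'_i, V', \langle W'_i\rangle)$ in $T$, with $V' \subset T$ of dimension at most $m$ and $W'_i$ optimal for $V'$ of defect at most $d$. The hypothesis that Conjecture \ref{conj:zilberpink} holds for the same $d$ and for subvarieties of dimension at most $m$ in $T$ then leaves only finitely many possibilities for the $W'_i$, and verifying that the assignment $W_i \mapsto W'_i$ has finite fibres — using that the non-trace part of $A$ admits no nonzero $\bar{\mathbb{Q}}$-isotrivial abelian subvariety, so cannot support infinite families of optimal subvarieties of bounded defect — produces the desired contradiction.
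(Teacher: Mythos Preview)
Your proposal has a genuine gap in how it interprets and applies Gao's theorem. The ``trace dichotomy'' you state --- that outside finitely many indices each $B_i = \langle W_i\rangle - \langle W_i\rangle$ is isogenous to $\Tr(B'_i)$ for some $B'_i \subset T_K$ --- is not what Gao's result yields. What one actually obtains (this is the paper's Theorem~\ref{thm:gao}) is a finite set of pairs $(q_0,H)$ with $q_0$ torsion and $H$ an \emph{arbitrary} abelian subvariety of $A$, together with a proper closed $Z \subsetneq V$, such that every optimal $W$ either lies in $Z$ or is a component of $(t+q_0+H)\cap V$ for some $t \in \Tr(T(\bar{\mathbb{Q}}))$. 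The abelian subvariety $H$ need not come from the trace at all; only the translating point $t$ does. Consequently your ``descent to the trace'' step cannot work as written: when $\dim H > 0$ there is no reason the $W_i$'s should correspond to optimal subvarieties of a fixed $V' \subset T$.

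What the paper does instead is induct on $\dim A$. Once Gao's output is in hand, one quotients by the fixed $H$: if $\dim H > 0$ then $A/H$ has strictly smaller dimension and the inductive hypothesis applies to it (its trace embeds isogenously into $T$, so $\ZP(T,m,d)$ propagates). Only when $\dim H = 0$ does one get $W = \{t+q_0\}$ with $t$ genuinely in the trace, and then one reduces to $\Tr(T_K)$, which again has smaller dimension since $\dim S > 0$. Your proposal also omits the separate reduction (the paper's Proposition~\ref{prop:fieldofdefofcurve}) from an arbitrary algebraically closed $K$ down to the algebraic closure of $\bar{\mathbb{Q}}(S)$ with $S \subset A_{g,l}$; Gao's theorem is stated for subvarieties of $(\mathfrak{A}_{g,l})_{\mathbb{C}}$, and spreading out over an arbitrary base $S$ with $\bar{\mathbb{Q}}(S) = K_0$ does not put you in that setting unless the classifying map $S \to A_{g,l}$ is generically finite. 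The double induction (on $\dim A$ and on the transcendence degree of the field of definition) is the missing architecture.
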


Note that Habegger and Pila have shown in Corollary 9.10 in \cite{MR3552014} that Conjecture \ref{conj:zilberpink} can be further reduced to the existence of sufficiently strong lower bounds for the size of the Galois orbits of optimal singletons over a field of definition that is finitely generated over $\mathbb{Q}$.

An analogue of Theorem \ref{thm:functionfieldzp} for powers of the multiplicative group was proven in \cite{MR2457263} by Bombieri, Masser and Zannier. Note that in this case the ambient algebraic group is always defined over $\bar{\mathbb{Q}}$. In our situation, this corresponds to the special case where $A$ is isomorphic to the base change of an abelian variety over $\bar{\mathbb{Q}}$.

Following a suggestion of Habegger, we prove Conjecture \ref{conj:zilberpink} in Theorem \ref{thm:habeggerpila} if $K = \bar{\mathbb{Q}}$ and $d = 1$. This together with the preceding Theorem \ref{thm:functionfieldzp} and Theorem 1.1 in \cite{MR3552014} implies the following corollary:

\begin{cor}\label{cor:corollary}
	Let $K$ be an algebraically closed field, let $m$ be a non-negative integer and $A$ an abelian variety defined over $K$. Then, Conjecture \ref{conj:zilberpink} holds for subvarieties of dimension at most $m$ in $A$ if either $m \leq 1$ or $d \leq 1$.
	\end{cor}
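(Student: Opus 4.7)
The plan is to combine the main reduction theorem of this paper, Theorem \ref{thm:functionfieldzp}, with two results that establish Conjecture \ref{conj:zilberpink} over $\bar{\mathbb{Q}}$ in the relevant parameter ranges. Write $(T, \Tr)$ for the $K/\bar{\mathbb{Q}}$-trace of $A$. By Theorem \ref{thm:functionfieldzp}, Conjecture \ref{conj:zilberpink} for subvarieties of dimension at most $m$ and defect at most $d$ in $A$ follows once we verify the same statement for subvarieties of dimension at most $m$ in $T$ (working over $\bar{\mathbb{Q}}$). So I only need to handle the two cases $m \leq 1$ and $d \leq 1$ for the abelian variety $T$ over $\bar{\mathbb{Q}}$.

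For $m \leq 1$, the case $m = 0$ is trivial because a zero-dimensional subvariety is a single point and equals its unique optimal subvariety. For $m = 1$, the subvarieties are curves, and on a curve Conjecture \ref{conj:zilberpink} is equivalent to Conjecture \ref{conj:zilberpinkatyp} by Lemma 2.7 of \cite{MR3552014}, and hence to Pink's conjecture by the remark following Conjecture \ref{conj:zilberpinkatyp}. Pink's conjecture for curves in an abelian variety over $\bar{\mathbb{Q}}$ is precisely Theorem 1.1 of \cite{MR3552014} by Habegger and Pila, which concludes this case.

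For $d \leq 1$, I split into the subcases $d = 0$ and $d = 1$. When $d = 0$, an optimal subvariety of $V$ of defect $0$ is exactly a maximal special subvariety of $V$ (since $\delta(W) = 0$ forces $\langle W \rangle = W$), so the finiteness assertion reduces to the Manin-Mumford conjecture proven by Raynaud \cite{Raynaud}. When $d = 1$, the required statement is exactly the content of Theorem \ref{thm:habeggerpila} of this article. No serious obstacle arises at the corollary level: the corollary is simply the assembly of Theorem \ref{thm:functionfieldzp} with Theorem 1.1 of \cite{MR3552014}, Theorem \ref{thm:habeggerpila}, and Raynaud's theorem. All the substantive work lies in the proofs of Theorem \ref{thm:functionfieldzp} and Theorem \ref{thm:habeggerpila}, not here.
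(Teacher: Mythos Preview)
Your proposal is correct and follows exactly the approach indicated in the paper: reduce to $\bar{\mathbb{Q}}$ via Theorem \ref{thm:functionfieldzp}, then invoke Theorem 1.1 of \cite{MR3552014} for $m\leq 1$ and Theorem \ref{thm:habeggerpila} for $d\leq 1$. Your additional case splits ($m=0$ trivial, $d=0$ via Raynaud) are harmless elaborations, and in fact the $d=0$ case is already subsumed by Theorem \ref{thm:habeggerpila}.
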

	
 Note that Conjecture \ref{conj:zilberpink} trivially holds for subvarieties of $A$ of dimension or codimension $0$. In codimension $1$, every proper optimal subvariety is special, so Conjecture \ref{conj:zilberpink} follows from the theorem of Raynaud in \cite{Raynaud} (Manin-Mumford conjecture). By Corollary \ref{cor:corollary}, Conjecture \ref{conj:zilberpink} also holds for subvarieties of codimension $2$ since every proper optimal subvariety of a subvariety of codimension $2$ has defect at most $1$; for the toric analogue see \cite{BMZ07} (there proven over $\bar{\mathbb{Q}}$, then extended to $\mathbb{C}$ in \cite{MR2457263}). Conjecture \ref{conj:zilberpink} has previously been proven for $K = \bar{\mathbb{Q}}$ and subvarieties of codimension $2$ in powers of elliptic curves with complex multiplication (CM) (in \cite{CVV}) and without CM (in \cite{HubVia}) as well as in arbitrary products of elliptic curves with CM (in \cite{CV}).
 
  In particular, Conjecture \ref{conj:zilberpink} holds for abelian varieties of dimension at most $4$ and by applying Theorem \ref{thm:functionfieldzp} we obtain the following corollary:

	\begin{cor}\label{cor:corollary2}
	Let $K$ be an algebraically closed field and $A$ an abelian variety defined over $K$ with $K / \bar{ \mathbb{Q}}$-trace $(T,\Tr)$. If $\dim T \leq 4$, then Conjecture \ref{conj:zilberpink} holds for $A$.
	\end{cor}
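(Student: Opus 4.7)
The plan is essentially immediate from the material assembled in the paragraphs preceding the corollary. First I would package those remarks into the intermediate fact that Conjecture \ref{conj:zilberpink} holds in full (i.e.~for every $m$ and every $d$) for every abelian variety $B$ defined over $\bar{\mathbb{Q}}$ with $\dim B \leq 4$. Indeed, for a subvariety $V \subset B$: the cases $\dim V = 0$ or $\dim V = \dim B$ are trivial; the codimension-$1$ case reduces to Manin--Mumford, since a proper optimal subvariety is then itself special; the codimension-$2$ case falls under Corollary \ref{cor:corollary}, because any proper optimal subvariety of a codimension-$2$ subvariety has defect at most $1$; and the only remaining possibilities when $\dim B \leq 4$ force $V$ to be a curve, which is covered by the $m \leq 1$ part of Corollary \ref{cor:corollary}. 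Applying this to the trace $T$, which by Definition \ref{def:trace} is (isomorphic to) an abelian variety over $\bar{\mathbb{Q}}$ and has $\dim T \leq 4$ by hypothesis, shows that Conjecture \ref{conj:zilberpink} holds for $T$ over $\bar{\mathbb{Q}}$ for every pair of non-negative integers $m, d$.

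The second and final step is a direct invocation of Theorem \ref{thm:functionfieldzp}: for each pair $(m,d)$, the validity of the conjecture for subvarieties of dimension at most $m$ in $T$ transfers to the validity for subvarieties of dimension at most $m$ in $A$ with the same $d$. Since $(m,d)$ is arbitrary, this yields the full statement of Conjecture \ref{conj:zilberpink} for $A$. I do not foresee any genuine obstacle here, as the two inputs cited in the introduction --- Theorem \ref{thm:functionfieldzp} and Corollary \ref{cor:corollary} --- do all the work; the only point requiring care is the dimension/codimension bookkeeping in the first step, especially in the borderline case $\dim T = 4$, where one must check that every subvariety of $T$ lies in exactly one of the listed cases.
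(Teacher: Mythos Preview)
Your proposal is correct and follows essentially the same route as the paper: the paragraph immediately preceding Corollary~\ref{cor:corollary2} performs exactly your case analysis (trivial in dimension/codimension~$0$, Manin--Mumford in codimension~$1$, the $d\leq 1$ part of Corollary~\ref{cor:corollary} in codimension~$2$, and the $m\leq 1$ part for the residual curve case), concludes that Conjecture~\ref{conj:zilberpink} holds for abelian varieties of dimension at most~$4$, and then invokes Theorem~\ref{thm:functionfieldzp}. Your bookkeeping for $\dim T = 4$ is accurate.
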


In the proof of Theorem \ref{thm:functionfieldzp}, we use a double induction firstly on the dimension of $A$ and secondly on the transcendence degree of its field of definition. If the transcendence degree of the field of definition is minimal in the sense that $A$ is obtained as a geometric fiber of a certain universal family $\mathfrak{A}_{g,l} \to A_{g,l}$ of abelian varieties, then we apply Gao's result to reduce to abelian varieties of smaller dimension in Proposition \ref{prop:mixedaxschanuel}.

We then use R\'emond's results to increase the transcendence degree of the field of definition in Proposition \ref{prop:fieldofdefofcurve}. This part of the proof at some points resembles the proof of the main result in \cite{MR2457263}, albeit formulated rather differently.

The proofs of both propositions begin with the use of the fact that optimal subvarieties are geodesic-optimal, i.e., optimal with respect to the geodesic defect, which is the analogue of the defect if one replaces special by weakly special subvarieties. For abelian varieties, this has been proven by Habegger and Pila. It turns out that their proof can be adapted to show that the same holds if one considers a slightly different definition of the geodesic defect, where one replaces weakly special subvarieties by translates of abelian subvarieties by a torsion point plus a $\bar{\mathbb{Q}}$-point of the trace. We call it the $\bar{\mathbb{Q}}$-geodesic defect and $\bar{\mathbb{Q}}$-geodesic-optimal subvarieties are then the analogue of geodesic-optimal subvarieties for this defect.

To any ($\bar{\mathbb{Q}}$-)geodesic-optimal subvariety, there is an associated abelian subvariety. Thanks to the results of Gao and R\'emond, this abelian subvariety lies in a finite set. If its dimension is positive, we can quotient out by it and use the inductive hypothesis. Otherwise, we either use the full strength of Gao's result to reduce to the trace or we use the inductive hypothesis on the transcendence degree of the field of definition concluding the proof.

As mentioned above, we also show that Conjecture \ref{conj:zilberpink} over an arbitrary algebraically closed field $K$ can be reduced to Pink's formulation of the conjecture over $\bar{\mathbb{Q}}$. For the precise statement of our result, we now give a more articulated version of Pink's conjecture. Recall that we denote by $A^{[k]}$ the union of all special subvarieties of an abelian variety $A$ of codimension at least $k$.

\begin{conj}\label{conj:pink}
Let $K$ be an algebraically closed field and let $d$ be a non-negative integer. Let $A$ be an abelian variety defined over $K$ and let $V$ be a subvariety of $A$. If $V \cap A^{[\max\{\dim V + 1, \dim A -d\}]}$ is Zariski dense in $V$, then $V$ is contained in a proper special subvariety of $A$.
\end{conj}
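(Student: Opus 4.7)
The plan is to attack Conjecture \ref{conj:pink} by first reducing to the algebraic case and then implementing a Pila-Zannier strategy over $\bar{\mathbb{Q}}$. By a defect-sensitive refinement of Lemma 2.7 in \cite{MR3552014}, Conjecture \ref{conj:pink} for a given $d$ corresponds to Conjecture \ref{conj:zilberpink} for the same $d$: a proper maximal atypical subvariety of $V$ is an optimal subvariety of $V$, and the defect cutoff $d$ translates into the codimension cutoff $\max\{\dim V + 1, \dim A - d\}$ on the enveloping special subvariety. Hence Theorem \ref{thm:functionfieldzp} of the present paper reduces the statement over $K$ to the same statement over $\bar{\mathbb{Q}}$ applied to the $K / \bar{\mathbb{Q}}$-trace $T$ of $A$.

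Taking $K = \bar{\mathbb{Q}}$, I would then follow the Pila-Zannier strategy on the universal cover $\pi \colon \mathbb{C}^g \twoheadrightarrow A(\mathbb{C})$. Fix a semialgebraic fundamental domain $\mathcal{F}$; special subvarieties of $A$ of codimension $\ge k$ lift to torsion-translates of $\mathbb{Q}$-rational affine subspaces of $\mathbb{R}^{2g}$ of real codimension $\ge 2k$. Form a definable set $Z$ in $\mathbb{R}_{\mathrm{an}, \exp}$ parameterizing pairs $(z, P)$ with $z \in \pi^{-1}(V) \cap \mathcal{F}$ and $P$ indexing a special subvariety of codimension $\ge \max\{\dim V + 1, \dim A - d\}$ through $\pi(z)$; definability in this o-minimal structure follows from work of Peterzil-Starchenko. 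The Zariski-density hypothesis of the conjecture yields a dense set of rational points on $Z$, and the Pila-Wilkie counting theorem bounds those outside the algebraic part of $Z$ by a sub-polynomial function of their height.

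Closing the argument requires a polynomial \emph{lower} bound on the Galois orbits, over a finitely generated field of definition, of the torsion and period data attached to points in $V \cap A^{[\max\{\dim V + 1, \dim A - d\}]}$, in the spirit of Masser's bounds for torsion points. Granting such a bound, Pila-Wilkie forces all but finitely many of the rational points to lie in the algebraic part of $Z$; a geometric analysis of this algebraic part, in the style of the block structure of \cite{MR3552014} and leveraging Gao's Ax-Schanuel \cite{G18}, then produces a positive-dimensional weakly special subvariety through which $V$ factors, which is upgraded to a proper special subvariety containing $V$ via the defect bookkeeping of Step~1, contradicting the hypothesis that $V$ is not contained in one. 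The main obstacle is precisely this Galois lower bound, which is presently established only in fragmentary generality (for curves by Habegger-Pila, for torsion points by Masser, for powers of CM elliptic curves by R\'emond and collaborators); this is why the unconditional instances one can extract by this strategy (Corollaries \ref{cor:corollary} and \ref{cor:corollary2}) are restricted to low $d$ or low $\dim T$.
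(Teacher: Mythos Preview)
The statement you are addressing is Conjecture \ref{conj:pink}, which the paper does \emph{not} prove: it is recorded as an open conjecture. The paper's contribution around it is Theorem \ref{thm:pequalzp} (Conjecture \ref{conj:pink} for all abelian subvarieties implies Conjecture \ref{conj:zilberpink}) together with the reduction to $\bar{\mathbb{Q}}$ via Theorem \ref{thm:functionfieldzp}. There is therefore no ``paper's own proof'' to compare your proposal against.

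Your proposal is not a proof either, and you say as much: the Pila--Zannier strategy you outline hinges on a polynomial Galois-orbit lower bound that you explicitly flag as ``presently established only in fragmentary generality.'' That is accurate, and it is precisely the obstruction the paper points to (see the remark after Theorem \ref{thm:functionfieldzp} citing Corollary 9.10 of \cite{MR3552014}). What you have written is a reasonable summary of the expected shape of an eventual proof and of the state of the art, but it does not close the conjecture, and the paper makes no claim to do so.

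One caution on your first paragraph: the reduction you invoke via Theorem \ref{thm:functionfieldzp} is stated for Conjecture \ref{conj:zilberpink}, so to transport Conjecture \ref{conj:pink} through it you need the equivalence of Conjectures \ref{conj:pink} and \ref{conj:zilberpink} with the parameter $d$ preserved in both directions. The direction Pink $\Rightarrow$ Zilber--Pink is Theorem \ref{thm:pequalzp} and requires the hypothesis on \emph{all abelian subvarieties}; the reverse direction is the easier one, but your one-line appeal to ``a defect-sensitive refinement of Lemma 2.7 in \cite{MR3552014}'' is doing real work that you have not actually carried out, and the precise bookkeeping between the defect cutoff $d$ and the codimension cutoff $\dim A - d$ deserves a careful check rather than an assertion.
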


The following statement draws a link between the above conjecture and Conjecture \ref{conj:zilberpink}:

\begin{thm}\label{thm:pequalzp}
	Let $K$ be an algebraically closed field, let $m$ be a non-negative integer and $A$ an abelian variety defined over $K$. Then, if Conjecture \ref{conj:pink} holds for some non-negative integer $d$ and subvarieties of dimension at most $m$ in every abelian subvariety $B$ of $A$, Conjecture \ref{conj:zilberpink} holds for the same $d$ and subvarieties of dimension at most $m$ in $A$.
	\end{thm}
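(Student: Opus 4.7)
I would prove Theorem~\ref{thm:pequalzp} by induction on $\dim A$, with a secondary induction on $\dim V \leq m$; the cases $\dim A = 0$ and $\dim V = 0$ are trivial.

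The first step is to reduce to the case $\langle V \rangle_A = A$. If $\langle V \rangle_A \subsetneq A$, then after translating by a torsion point, $V$ lies in the proper abelian subvariety $B = \langle V \rangle_A \subsetneq A$. Since special subvarieties of $A$ contained in $B$ are precisely special subvarieties of $B$, the smallest special subvariety $\langle W \rangle$ of any $W \subset V$ is the same whether computed in $A$ or in $B$, so the optimality and defect of subvarieties $W \subset V$ are intrinsic. The outer induction hypothesis applied to $B$ (whose abelian subvarieties are abelian subvarieties of $A$, so the hypothesis on Pink's conjecture is inherited) yields the conclusion in this case.

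So suppose $\langle V \rangle_A = A$, and apply the hypothesis Pink's conjecture to $V \subset A$. Since $V$ is not contained in a proper special subvariety, $V \cap A^{[k]}$ with $k = \max\{\dim V + 1, \dim A - d\}$ is not Zariski dense in $V$. Let $Z \subsetneq V$ be a proper closed subset containing $V \cap A^{[k]}$, with irreducible components $Z_1, \ldots, Z_r$ of dimension strictly less than $\dim V$. Any optimal subvariety $W \subsetneq V$ of defect $\leq d$ with $W \subset A^{[k]}$ lies in some $Z_i$, and, since $Z_i \subset V$, is still optimal in $Z_i$; the inner induction applied to $Z_i \subset A$ yields finitely many such $W$ per $Z_i$. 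Together with the single contribution $W = V$, it remains to count optimal $W \subsetneq V$ of defect $\leq d$ with $W \not\subset A^{[k]}$.

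The main obstacle is this last case. Here $\dim \langle W \rangle > \dim A - k$, so the abelian subvariety $B_W \subset A$ underlying $\langle W \rangle$ is relatively large. I would project via $\pi_W : A \to A/B_W$, under which $W$ is sent to a torsion point of $\pi_W(V)$. Via Poincar\'e's complete reducibility theorem, $A/B_W$ is isogenous to an abelian subvariety $B'_W \subset A$ to which the hypothesis applies. Either $\pi_W(V)$ is contained in a proper special subvariety of $A/B_W$, in which case pulling back forces $V$ into a proper special subvariety of $A$, contradicting $\langle V \rangle_A = A$; or it is not, and Manin-Mumford (the defect-$0$ instance of the hypothesis) in $A/B_W$ forces only finitely many torsion images $\pi_W(W)$ for each fixed $B_W$. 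The hardest step, which I expect to be the main technical challenge, is to show that the abelian subvarieties $B_W$ arising from the relevant optimal $W$ form a finite collection --- even though $A$ may in general possess infinitely many abelian subvarieties of bounded dimension --- by combining the defect bound $\dim \langle W \rangle \leq \dim W + d$ with the strict atypicality $\delta(W) < \dim A - \dim V$ coming from optimality.
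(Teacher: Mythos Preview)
Your reduction to $\langle V\rangle = A$ and the inductive treatment of optimal $W$ with $W \subset A^{[k]}$ are fine, and indeed this part mirrors the paper's argument (its Claim~2 in Section~\ref{sec:pequalzp} is exactly this iteration). The real gap is the case $W \not\subset A^{[k]}$, which as you observe can occur only for $\dim W \geq 1$. There the whole weight rests on the finiteness of the abelian subvarieties $B_W$, and your proposed justification --- combining the defect bound $\dim\langle W\rangle \leq \dim W + d$ with $\delta(W) < \dim A - \dim V$ --- cannot work: these are purely numerical constraints bounding $\dim B_W$, and an abelian variety can have infinitely many abelian subvarieties of a given dimension. The finiteness of the relevant $B_W$ is a genuine structural fact (R\'emond's theorem, equivalently the ``weak Zilber--Pink'' of Habegger--Pila), not a consequence of dimension counting. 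Without it, your projection argument via $\pi_W$ does not terminate.

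The paper sidesteps this difficulty rather than confronting it. It first restricts attention to optimal \emph{singletons} of defect at most $d$: for these, one always has $\dim\langle\{p\}\rangle \leq \min\{d,\dim B - \dim W - 1\}$ inside $B = \langle W\rangle$ (with $W$ a component from the previous inductive step), so $\{p\} \in B^{[k]}$ automatically and your problematic case never arises. This yields finiteness of optimal singletons by exactly your descending-induction-on-$Z$ argument. The passage from singletons to arbitrary optimal subvarieties is then obtained by invoking Theorem~9.8(i) of \cite{MR3552014} as a black box (this is Theorem~\ref{thm:reductiontosingletons} in the paper), and it is precisely that theorem which encodes the structure-theoretic input you are missing. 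So the paper's proof and yours diverge exactly at the point where you identify the ``main technical challenge'': the paper quotes it, you would have to reprove it.
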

	
	We prove Theorem \ref{thm:pequalzp} in Section \ref{sec:pequalzp}. The proof is a direct application of Theorem 9.8(i) in \cite{MR3552014}. As pointed out by Ullmo and Zannier, the analogous reduction can be done in the toric case by imposing additional multiplicative relations on the positive-dimensional atypical intersections. Combining Theorem \ref{thm:pequalzp} and Theorem \ref{thm:functionfieldzp} yields the following corollary:
	
		\begin{cor}
Let $K$ be an algebraically closed field, let $m$ be a non-negative integer and $A$ an abelian variety defined over $K$ with $K / \bar{ \mathbb{Q}}$-trace $(T,\Tr)$. Then, if Conjecture \ref{conj:pink} holds for some non-negative integer $d$ and subvarieties of dimension at most $m$ in every abelian subvariety $T'$ of $T$ (over the field $\bar{\mathbb{Q}}$), Conjecture \ref{conj:zilberpink} holds for the same $d$ and subvarieties of dimension at most $m$ in $A$ (over $K$).
	\end{cor}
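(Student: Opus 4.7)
The plan is to deduce this corollary by directly chaining the two main results of the paper, with essentially no additional work. First I would apply Theorem \ref{thm:pequalzp} to the abelian variety $T$ in place of $A$, working over the algebraically closed field $\bar{\mathbb{Q}}$: by the definition of the $K/\bar{\mathbb{Q}}$-trace (Definition \ref{def:trace}), $T$ is (isomorphic to) an abelian variety defined over $\bar{\mathbb{Q}}$, and its abelian subvarieties $T'$ are also defined over $\bar{\mathbb{Q}}$. The hypothesis of the corollary is precisely that Conjecture \ref{conj:pink} holds for the given $d$ and for subvarieties of dimension at most $m$ in every such $T'$, which is exactly what Theorem \ref{thm:pequalzp} requires when applied with ambient abelian variety $T$. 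Its conclusion is that Conjecture \ref{conj:zilberpink} holds for the same $d$ and for subvarieties of dimension at most $m$ in $T$, over $\bar{\mathbb{Q}}$.

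Next I would feed this intermediate statement into Theorem \ref{thm:functionfieldzp}. That theorem takes as input exactly what I have just produced, namely the validity of Conjecture \ref{conj:zilberpink} for some non-negative integer $d$ and subvarieties of dimension at most $m$ in the trace $T$ over $\bar{\mathbb{Q}}$, and delivers as output exactly the desired conclusion: Conjecture \ref{conj:zilberpink} for the same $d$ and subvarieties of dimension at most $m$ in $A$ over $K$.

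Since the corollary is a pure formal concatenation of Theorems \ref{thm:pequalzp} and \ref{thm:functionfieldzp}, there is no real obstacle; the mathematical content sits in the proofs of those two theorems. The only pedestrian verifications are the ones noted above about $T$ and its abelian subvarieties being defined over $\bar{\mathbb{Q}}$, so that speaking of Conjecture \ref{conj:pink} for $T'$ \emph{over $\bar{\mathbb{Q}}$} makes sense and feeds correctly into Theorem \ref{thm:pequalzp}. Once that is observed, the proof is a single composition of implications and can be written in just a few lines.
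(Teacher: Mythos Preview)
Your proposal is correct and matches the paper's approach exactly: the paper states that the corollary follows by ``combining Theorem \ref{thm:pequalzp} and Theorem \ref{thm:functionfieldzp}'', which is precisely the chain of implications you describe. The only additional remark you make, that $T$ and its abelian subvarieties are defined over $\bar{\mathbb{Q}}$, is immediate from Definition \ref{def:trace} and the fact that $\bar{\mathbb{Q}}$ is algebraically closed.
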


\section{Preliminaries}

In this section, we collect some results that are going to be useful in the proof of Theorem \ref{thm:functionfieldzp}.

\subsection{Definitions and a useful lemma}

We are going to perform several base changes. We use the following notation:

\begin{defn}\label{def:basechange}
Let $V$ be a variety over a field $K$ and let $K \subset L$ be a field extension. Then $V_L = V\times_{K} L$ is called the base change of $V$ to $L$. We use analogous notation for the base change of morphisms between varieties.
\end{defn}

In the proof of Proposition \ref{prop:fieldofdefofcurve}, we argue by induction on the transcendence degree of our field of definition. Here is a basic fact used in the inductive step.

\begin{lem}\label{lem:ausefullemma}
	Let $K \subset L$ be an extension of algebraically closed fields such that $L$ has transcendence degree $1$ over $K$. Let $V$ be a variety over $K$ and let $W$ be a subvariety of $V_L$. Then there exists a subvariety $W'$ of $V$ with $\dim W' \leq \dim W + 1$ such that $W \subset W'_L$.
\end{lem}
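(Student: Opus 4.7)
The plan is to take $W'$ to be the Zariski closure in $V$ of the image of the generic point $\eta_W$ of $W$ under the canonical projection $f \colon V_L \to V$; by construction $W'$ is an irreducible closed subset of $V$, hence a subvariety.

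For the containment $W \subset W'_L$, I would observe that $W'_L = W' \times_K L = f^{-1}(W')$ as closed subschemes of $V_L$, since base change commutes with closed immersions. Because $f(\eta_W)$ lies in $W'$ by the very definition of $W'$, we have $\eta_W \in W'_L$, and therefore $W = \overline{\{\eta_W\}} \subset W'_L$.

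For the dimension estimate, set $v = f(\eta_W)$; the induced map on residue fields gives a field inclusion $\kappa(v) \hookrightarrow \kappa(\eta_W) = L(W)$. Since $V$ is of finite type over $K$, one has $\operatorname{tr.deg}_K \kappa(v) = \dim \overline{\{v\}} = \dim W'$, and since $W$ is a subvariety of the $L$-variety $V_L$ (note that $V_L$ is integral because $V$ is geometrically integral over the algebraically closed field $K$), one has $\operatorname{tr.deg}_L L(W) = \dim W$. Applying additivity of transcendence degree in the towers $K \subset L \subset L(W)$ and $K \subset \kappa(v) \subset L(W)$ then yields
\[
\dim W' \;=\; \operatorname{tr.deg}_K \kappa(v) \;\leq\; \operatorname{tr.deg}_K L(W) \;=\; \operatorname{tr.deg}_K L + \operatorname{tr.deg}_L L(W) \;=\; 1 + \dim W,
\]
which is exactly the required bound.

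I do not anticipate any substantial obstacle; the argument reduces to a transcendence degree computation. The only points requiring a moment of care are the scheme-theoretic identity $W'_L = f^{-1}(W')$ and the identification $\operatorname{tr.deg}_K \kappa(v) = \dim \overline{\{v\}}$, both of which are standard for integral schemes of finite type over a field.
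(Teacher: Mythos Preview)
Your proof is correct. Both your argument and the paper's construct the same $W'$ --- namely the Zariski closure in $V$ of the image of $W$ under the projection $V_L \to V$ --- but the bookkeeping differs. The paper first replaces $L$ by a finitely generated subextension, realizes it as the function field of a curve $C$ over $K$, spreads $W$ out to a family $\mathcal{W} \subset V \times_K C$ with $\dim \mathcal{W} = \dim W + 1$, and then projects to $V$; the dimension bound comes from $\dim W' \leq \dim \mathcal{W}$. You bypass the spreading-out entirely by working directly with the generic point of $W$ and reading off the dimension bound from a transcendence-degree calculation in the tower $K \subset L \subset L(W)$. Your route is a bit more streamlined and handles arbitrary $L$ of transcendence degree $1$ over $K$ without first cutting down to a finitely generated subfield; the paper's version is perhaps more visibly geometric. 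Either way the content is the same.
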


\begin{proof}We can replace $L$ by a finitely generated subextension of $K$ of transcendence degree $1$ over which $W$ is defined. We can then find a curve $C$ over $K$ such that $K(C) = L$. We can also find a subvariety $\mathcal{W} \subset V \times_K C$ of dimension $\dim W + 1$ such that the generic fiber of $\mathcal{W}$ over $C$ is $W$. The closure of the projection of $\mathcal{W}$ onto $V$ is our $W'$.
\end{proof}

We now give a formal definition of the trace of an abelian variety with respect to an extension of algebraically closed fields.

\begin{defn}\label{def:trace} \upshape (\cite{MR2255529}, Theorem 6.2) \itshape
 Let $K \subset L$ be an extension of algebraically closed fields. Let $A$ be an abelian variety defined over $L$. The $L/K$-trace of $A$ is a pair $(T,\Tr)$ of an abelian variety $T$ that is defined over $K$ and a homomorphism of algebraic groups $\Tr: T_L \to A$ that is characterized uniquely by the fact that for every abelian variety $T'$ that is defined over $K$ and every homomorphism of algebraic groups $\sigma: T'_L \to A$, there is a homomorphism of algebraic groups $\tau: T' \to T$ such that $\sigma = \Tr \circ \tau_L$. The map $\Tr$ is a closed embedding.
 \end{defn}

We introduce a notation that is going to make the exposition more agile.

\begin{defn}
	Let $K$ be an algebraically closed field and let $A$ be an abelian variety over $K$. Let $m$ and $d$ be non-negative integers. We say that $\ZP (A,m,d)$ holds if Conjecture \ref{conj:zilberpink} holds for $d$ and for all subvarieties $V$ of $A$ with $\dim V\leq m$. 
\end{defn}

We can then rephrase Theorem \ref{thm:functionfieldzp} as the implication
\[ \ZP (T,m,d) \implies \ZP (A,m,d)\]
for non-negative integers $m$ and $d$ and an abelian variety $A$ over an algebraically closed field $K$ with $K / \bar{ \mathbb{Q}}$-trace $(T,\Tr)$.

\subsection{Smoothness and optimality under homomorphisms}

We are going to project on quotients of abelian varieties by abelian subvarieties several times. The following two lemmata are going to be useful in this respect:

\begin{lem}\label{lem:smooth}
Let $K$ be an algebraically closed field and let $f: V \to W$ be a dominant morphism of algebraic varieties, defined over $K$. Then there exists $V_0 \subset V$ open and dense such that $f(V_0)$ is open and dense in $W$ and $f|_{V_0}: V_0 \to f(V_0)$ is smooth.
\end{lem}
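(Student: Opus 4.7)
The plan is to invoke the standard generic smoothness theorem, which is available because all fields in the paper are of characteristic $0$. This is a textbook result (see e.g.~Hartshorne, Corollary III.10.7), so the task is really just to assemble the pieces correctly and deduce the two extra properties (openness and density of the image).

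First, I would reduce to the smooth case. Let $V^{\mathrm{sm}}$ and $W^{\mathrm{sm}}$ denote the smooth loci of $V$ and $W$; these are open and dense in $V$ and $W$ respectively, because the singular locus of a variety over a perfect field is a proper closed subset. Since $f$ is dominant, the image of the generic point of $V$ is the generic point of $W$, which lies in $W^{\mathrm{sm}}$; therefore $V^{\mathrm{sm}} \cap f^{-1}(W^{\mathrm{sm}})$ is a non-empty open subset $V_1 \subset V$, hence open and dense, and the restriction $f|_{V_1}$ is a morphism between smooth $K$-varieties.

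Next, I would apply generic smoothness to $f|_{V_1} : V_1 \to W^{\mathrm{sm}}$. The locus in $V_1$ where $f|_{V_1}$ is smooth is open by general principles, and in characteristic $0$ it is non-empty: equivalently, the generic fiber of a dominant morphism of varieties over a field of characteristic $0$ is geometrically reduced, hence smooth on a dense open subset. Taking $V_0$ to be this smooth locus, we obtain an open dense $V_0 \subset V$ on which $f$ is smooth.

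Finally, I would verify that $f(V_0)$ is open and dense in $W$. Openness follows because smooth morphisms are flat and flat morphisms of finite type between Noetherian schemes are open. Density follows because $V_0$ is open dense in $V$ and therefore contains the generic point of $V$, which $f$ (being dominant) sends to the generic point of $W$. The argument is entirely formal; there is no real obstacle, the only subtle point being the use of characteristic $0$ to guarantee that the generic-smoothness locus is non-empty.
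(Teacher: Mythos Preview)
Your proof is correct and follows essentially the same route as the paper: restrict to the smooth locus of the source and invoke generic smoothness in characteristic $0$ (Hartshorne, Corollary III.10.7). The only cosmetic difference is that the paper first uses Chevalley's theorem to pick an open dense subset of $W$ contained in the image and then pulls back the open set furnished by generic smoothness, whereas you take the smooth locus in the source directly and obtain openness of $f(V_0)$ a posteriori from flatness; both arrangements yield the same conclusion.
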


\begin{proof}
By Corollary II.8.16 in \cite{MR0463157}, we can find $V_1 \subset V$ open, dense, and non-singular. By Theorem 10.19 in \cite{MR2675155}, $f(V_1)$ contains $V_2$ open and dense in $\overline{f(V_1)} = \overline{f(V)} = W$. By generic smoothness (Corollary III.10.7 in \cite{MR0463157}), we can find $V_3 \subset V_2$ open and dense in $V_2$ and hence in $W$ such that $V_0 = f|_{V_1}^{-1}(V_3)$ is open and dense in $V$ and $f|_{V_0}: V_0 \to V_3 = f(V_0)$ is smooth.
\end{proof}

\begin{lem}\label{lem:isog}
Let $K$ be an algebraically closed field. Let $f:A\rightarrow A'$ be a homomorphism of algebraic groups between abelian varieties defined over $K$. Then the following hold:
\begin{enumerate}
\item Let $V$ be a subvariety of $A$. Suppose that $V_0 \subset V$ is open and dense in $V$ such that $f(V_0)$ is open and dense in $f(V)$ and $f|_{V_0}: V_0 \to f(V_0)$ is smooth. Let $W$ be a subvariety of $V$ that is optimal for $V$ in $A$ and intersects $V_0$. If $\langle W \rangle$ contains a translate of a component of $\ker f$, then $f(W)$ has defect at most $\delta(W)$ and is optimal for $f(V)$ in $A'$.
\item If $f$ has finite kernel and $\ZP(A',m,d)$ holds, then $\ZP(A,m,d)$ holds.
\end{enumerate}
\end{lem}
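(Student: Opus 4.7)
My plan is to prove (1) by dimension bookkeeping and then deduce (2) from (1) by induction on $m$. For (1), write $K_0$ for the identity component of $\ker f$; the hypothesis ``$\langle W\rangle$ contains a translate of a component of $\ker f$'' says that $K_0$ is contained in the abelian subvariety $B$ appearing in the decomposition $\langle W\rangle = t + B$. Consequently $f(\langle W\rangle) = f(t)+f(B)$ is a special subvariety of dimension $\dim\langle W\rangle - \dim K_0$, yielding $\dim\langle f(W)\rangle\leq\dim\langle W\rangle - \dim K_0$. On the other hand, picking $x\in W\cap V_0$ at which $W$ (and $f(W)$) is smooth, the smoothness of $f|_{V_0}$ gives via the tangent spaces $\dim f(W)\geq\dim W - (\dim V - \dim f(V))$. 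Since $\dim V - \dim f(V)\leq\dim K_0$ (each fiber of $f$ is a coset of $\ker f$), combining yields $\delta(f(W))\leq\delta(W)$.

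The optimality of $f(W)$ for $f(V)$ I would argue by contradiction: given $U'$ with $f(W)\subsetneq U'\subset f(V)$ and $\delta(U')\leq\delta(f(W))$, I take $U$ to be an irreducible component, containing $W$, of the closure in $V$ of $(f|_{V_0})^{-1}(U'\cap f(V_0))$. The smoothness of $f|_{V_0}$ forces every such component to have dimension $\dim U'+(\dim V-\dim f(V))$, and $W\neq U$ because equality would force $f(W)=U'$ against hypothesis; hence $W\subsetneq U\subset V$. For $\langle U\rangle$: writing $\langle U'\rangle = t'+B'$ and $\tilde B = (f^{-1}(B'))^0$, the preimage $f^{-1}(\langle U'\rangle)$ decomposes as a finite union of torsion cosets of $\tilde B$---here the divisibility of abelian varieties lets us pick torsion representatives of $\ker f / K_0$---each of dimension $\dim\langle U'\rangle + \dim K_0$; irreducibility of $U$ places it in one, so $\dim\langle U\rangle\leq\dim\langle U'\rangle + \dim K_0$. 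Assembling the four bounds, the slack between $\dim K_0$ and $\dim V - \dim f(V)$ cancels symmetrically, delivering $\delta(U)\leq\delta(W)$, in contradiction with the optimality of $W$ for $V$ in $A$.

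For (2) I induct on $m$; the base case $m=0$ is trivial. For the step, Lemma \ref{lem:smooth} applied to $f|_V$ yields $V_0\subset V$ open dense on which $f$ is smooth, hence étale since finite $\ker f$ forces $K_0=\{0\}$. Each optimal $W$ for $V$ in $A$ of defect at most $d$ either meets $V_0$---then part (1) applies (the trivial $K_0$ is contained in any abelian subvariety) and produces an optimal $f(W)\subset f(V)\subset A'$ of defect at most $d$, so $\ZP(A',m,d)$ together with the generic finiteness of $f|_V$ gives finitely many such $W$---or lies in some irreducible component $V_i$ of $V\setminus V_0$, where it remains optimal for $V_i$ in $A$ (the defining inequality for optimality is inherited under shrinking the ambient variety), and the inductive hypothesis applied to $V_i$ (of strictly smaller dimension) finishes the count. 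I expect the most delicate step to be the optimality argument in part (1): the lift $U$ needs its dimension and $\dim\langle U\rangle$ bounded simultaneously, and the argument succeeds only because the error terms---all traceable to the inequality $\dim V -\dim f(V)\leq\dim K_0$---cancel symmetrically in the final defect comparison.
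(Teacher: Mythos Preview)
Your proof is correct and follows essentially the same route as the paper's: the same defect inequality $\delta(f(W))\leq\delta(W)-\dim\ker f+n$ (with $n=\dim V-\dim f(V)$), the same lift-and-compare contradiction for optimality, and the same induction on $m$ for part~(2). The only notable difference is cosmetic: where you extract $\dim f(W)\geq\dim W-n$ from tangent spaces at a well-chosen smooth point, the paper obtains it by taking $U=f(W)$ in the preimage-component computation (so that $W$ sits inside a component of $f|_V^{-1}(f(W))$ of dimension $\dim f(W)+n$); and where you explicitly decompose $f^{-1}(\langle U'\rangle)$ into torsion cosets of $(f^{-1}(B'))^0$, the paper simply asserts $\dim\langle U'\rangle\leq\dim\langle U\rangle+\dim\ker f$.
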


\begin{proof}
	For (1), let $W$ be optimal for $V$ in $A$ such that $W\cap V_0 \neq \emptyset$, where $f|_{V_0}: V_0 \to f(V_0)$ is smooth of relative dimension $n$. Let $U$ be a subvariety of $A'$ such that $f(W) \subset U \subset f(V)$ and $\delta(U) \leq \delta(f(W))$. Since $f(\langle W \rangle)$ contains $\langle f(W)\rangle$ and $\langle W \rangle$ contains a translate of a component of $\ker f$, we have
	$$
	\delta(U)\leq \delta(f(W))\leq \dim f( \langle W \rangle) - \dim f(W)= \dim \langle W \rangle - \dim \ker f - \dim f(W).
	$$	
	 Let now $U'$ be an irreducible component of $f^{-1}(U) \cap V = f|_V^{-1}(U)$ that contains $W$. We have $U' \cap V_0 \neq \emptyset$ since it contains $W \cap V_0$. Furthermore, $U' \cap V_0$ is an irreducible component of $f|_{V_0}^{-1}(U \cap f(V_0))$. Since $f|_{V_0}: V_0 \to f(V_0)$ is smooth of relative dimension $n$, it follows that $\dim U' = \dim(U' \cap V_0) = \dim(U \cap f(V_0)) + n = \dim U + n$. Taking $U = f(W)$ and using that $W \subset U'$ shows that $\dim W \leq \dim f(W) + n$. If $U$ is again an arbitrary subvariety as above, we deduce that
	\begin{equation}\label{eq:defect}
	\delta(U)\leq \delta(f(W))\leq \dim \langle W \rangle - \dim \ker f - \dim W +n.
	\end{equation}
	After noting that $n\leq \dim \ker f$, we deduce that the defect of $f(W)$ is at most the defect of $W$.
	
	Assume now that $f(W)$ is not optimal for $f(V)$ in $A'$. It follows that we can choose $U$ as above with $f(W) \subsetneq U$. Since $\dim U > \dim f(W)$, we have $W \subsetneq U'$ and so $\delta(U') > \delta(W)$ by the optimality of $W$. Moreover, we certainly have $\dim \langle U' \rangle \leq \dim \langle U\rangle + \dim \ker f$, so it follows that $\delta(U') \leq \delta(U) + \dim \ker f - n$. Using \eqref{eq:defect}, we deduce that
$$
	\delta(U)\leq \delta(W) - \dim \ker f  +n< \delta(U')- \dim \ker f  +n \leq \delta(U),
	$$	 
	a contradiction. 

	We can now prove (2) by induction on $m$. For the base step of the induction, note that $\ZP(A,0,d)$ holds trivially for all $d$. Let now $V$ be a subvariety of $A$ of dimension $m$ and let $W$ be an optimal subvariety for $V$ in $A$ such that $W$ has defect at most $d$. By Lemma \ref{lem:smooth}, we can find $V_0 \subset V$ open and dense such that $f(V_0)$ is open and dense in $f(V)$ and $f|_{V_0}: V_0 \to f(V_0)$ is smooth.
	
	If $W \subset V\backslash V_0$, then $W$ is contained in one of the finitely many components of $V\backslash V_0$. The dimension of that component is at most $m-1$ and $W$ has defect at most $d$ and is optimal for that component in $A$, so we are done by induction. Otherwise, we have $W \cap V_0 \neq \emptyset$. Since $\ker f$ is finite, a translate of one of its components is trivially contained in $\langle W \rangle$, so we can apply (1) to find that $f(W)$ is optimal for $f(V)$ in $A'$ and has defect at most $d$. As $f$ has finite kernel, we have $\dim f(V) = \dim V$, so it follows from $\ZP(A',m,d)$ that $f(W)$ belongs to a finite set of varieties. The same follows for $W$ since $W$ is an irreducible component of $f^{-1}(f(W))$ because of the equality $\dim W = \dim f(W) = \dim f^{-1}(f(W))$.
\end{proof}

\subsection{Abelian schemes}

We start this subsection by associating to any abelian variety over an algebraically closed field of characteristic $0$ a ``subfamily'' of the universal family of principally polarized abelian varieties of fixed dimension with some fixed level structure.

 We denote by $A_{g,l}$ the moduli space of principally polarized abelian varieties of dimension $g$ with symplectic level $l$-structure over $\bar{\mathbb{Q}}$. For $l \geq 3$, this is a fine moduli space and we denote by $\mathfrak{A}_{g,l}$ the corresponding universal family over $A_{g,l}$.

\begin{lem}\label{lem:finemodulispace}
	Let $K$ be an algebraically closed field. Let $A$ be an abelian variety of dimension $g$ defined over $K$. Fix a natural number $l \geq 3$. Then there exists a subvariety $S \subset A_{g,l}$ with the following property: Let $\mathcal{A} = \mathfrak{A}_{g,l} \times_{A_{g,l}} S$ and let $A'$ be the generic fiber of $\mathcal{A} \to S$. There exists a field embedding $\bar{\mathbb{Q}}(S) \hookrightarrow K$ such that $A$ is isogenous to $A'_K$.
\end{lem}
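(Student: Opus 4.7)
The plan is to construct a $K$-point of the fine moduli space $A_{g,l}$ corresponding to an abelian variety isogenous to $A$, take $S$ to be the Zariski closure of its image in $A_{g,l}$, and then extract the desired conclusion from the universal property of $\mathfrak{A}_{g,l}$.

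As a first step I would produce an abelian variety $B$ isogenous to $A$ that is principally polarized. Starting from any polarization $\lambda : A \to A^\vee$, the kernel $\ker\lambda$ is a finite \'etale commutative group scheme over $K$ (since $\operatorname{char} K = 0$) carrying the non-degenerate symplectic Weil pairing. Choosing a maximal isotropic (Lagrangian) subgroup $H \subset \ker\lambda$, the polarization $\lambda$ descends to a principal polarization on $B := A/H$, producing the desired isogeny $A \to B$. Since $K$ is algebraically closed of characteristic $0$ and $B[l] \cong (\mathbb{Z}/l\mathbb{Z})^{2g}$, a symplectic basis with respect to the $l$-Weil pairing can be chosen, giving a symplectic level $l$-structure on $B$.

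Next, since $l \geq 3$, the moduli space $A_{g,l}$ is fine, and the triple consisting of $B$, its principal polarization, and the level structure determines a unique $K$-point $s : \Spec K \to A_{g,l}$. Let $S$ be the Zariski closure of the image of $s$, which is an irreducible closed subvariety of $A_{g,l}$ defined over $\bar{\mathbb{Q}}$, and let $\eta$ be its generic point. Since $s$ factors through $\eta$, one obtains a ring embedding $\bar{\mathbb{Q}}(S) = \kappa(\eta) \hookrightarrow K$.

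Finally, setting $\mathcal{A} = \mathfrak{A}_{g,l} \times_{A_{g,l}} S$ and letting $A'$ be its generic fiber, the base change $A'_K$ along this embedding is canonically identified with the fiber of $\mathfrak{A}_{g,l}$ over $s$, which by the moduli interpretation of $A_{g,l}$ is $B$ itself. Hence $A$ is isogenous to $A'_K$, as required. The main obstacle I foresee is the first step: the reduction to a principally polarized abelian variety is classical but requires invoking the existence of Lagrangian subgroups of finite symplectic abelian groups and verifying that $\lambda$ descends along the quotient to a principal polarization.
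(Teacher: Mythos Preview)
Your argument is correct and follows the same overall strategy as the paper: pass to a principally polarized abelian variety isogenous to $A$, equip it with a symplectic level $l$-structure, use the fine moduli property of $A_{g,l}$ to obtain a map to the moduli space, and take $S$ to be the closure of the image. The key point you use---that the scheme-theoretic image of $s:\Spec K\to A_{g,l}$ is a single point which is automatically the generic point of its closure $S$, so that $s$ factors through $\Spec\bar{\mathbb Q}(S)$---is entirely valid and gives the embedding $\bar{\mathbb Q}(S)\hookrightarrow K$ directly.

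The paper proceeds slightly differently in execution: it first descends $A$ to a finitely generated subfield $K_0\subset K$, chooses a normal model $V$ with $\bar{\mathbb Q}(V)=K_0$, spreads the abelian variety out to an abelian scheme $\mathcal B\to V$ (extending the polarization and level structure), and then obtains a \emph{morphism} $\iota:V\to A_{g,l}$ whose image it closes up. Your route is more economical: by working with the single $K$-point rather than a spread-out family, you bypass the spreading-out and extension arguments entirely. Both yield the same $S$, since the generic point of $V$ maps to the generic point of $S$ in the paper's construction. The paper's version has the minor advantage of making the family over $S$ explicit from the start, but for the purposes of this lemma your shortcut is perfectly adequate.
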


\begin{proof}
	Over $K$, the abelian variety $A$ is isogenous to a principally polarized abelian variety by Corollary 1 on p. 234 of \cite{Mumford} and so we can assume without loss of generality that $A$ is itself principally polarized. We can find a field $K_0 \subset K$ and an abelian variety $B$ defined over $K_0$ such that $\bar{\mathbb{Q}} \subset K_0$, $K_0$ is finitely generated over $\bar{\mathbb{Q}}$, and $A = B_K$. Without loss of generality, we can assume that all torsion points of $B$ of order $l$ are $K_0$-rational. We can find a normal variety $V$, defined over $\bar{\mathbb{Q}}$, with $\bar{\mathbb{Q}}(V) = K_0$. By spreading out (see Theorem 3.2.1 and Table 1 on pp. 306--307 of \cite{MR3729254}), we find an abelian scheme $\mathcal{B} \to V$ with generic fiber $B$ (after maybe replacing $V$ by an open dense subset). The principal polarization of $B$ gives a principal polarization of $\mathcal{B} \to V$ by the argument on p. 6 of \cite{MR1083353}. Among the $l$-torsion points of $B$, we choose a symplectic basis with respect to the Weil pairing induced by the principal polarization. The elements of this basis extend to $l$-torsion sections of $\mathcal{B} \to V$. In this way, $\mathcal{B} \to V$ becomes a principally polarized abelian scheme with symplectic level $l$-structure as defined in the Appendix to Chapter 7 of \cite{MR1304906}.
	
	Since $A_{g,l}$ is a fine moduli space by the Appendix to Chapter 7 of \cite{MR1304906}, we get a morphism $\iota: V \to A_{g,l}$, defined over $\bar{\mathbb{Q}}$, such that $\mathcal{B} $ is isomorphic to $ \mathfrak{A}_{g,l} \times_{A_{g,l}} V$. Let $S$ be the closure of $\iota(V)$, let $\mathcal{A} = \mathfrak{A}_{g,l} \times_{A_{g,l}} S$ and let $A'$ be the generic fiber of $\mathcal{A}\to S$. It follows from the universal property of the fiber product that $\mathcal{B} $ is isomorphic to $ \mathcal{A} \times_{S} V$. The dominant morphism $\iota: V \to S$ yields a field embedding $\bar{\mathbb{Q}}(S) \hookrightarrow K_0$. Passing to the generic fiber over $V$ shows that $B$ is isomorphic to $A'_{K_0}$.
\end{proof}

Let $S$ be a variety over an algebraically closed field $K$. Given an abelian scheme $\mathcal{A} \to S$ with geometric generic fiber $A$, we need to extend abelian subvarieties and torsion points of $A$ and $K$-points of the trace of $A$ to abelian subschemes, torsion sections, and constant sections, possibly after a base change.

\begin{defn}\label{def:abeliansubscheme}
Let $S$ be a variety and $\mathcal{A} \to S$ an abelian scheme, both defined over an algebraically closed field $K$. A subgroup scheme (resp. an open subgroup scheme, resp. a closed subgroup scheme) $\mathcal{B}$ of $\mathcal{A}$ is a subscheme (resp. an open subscheme, resp. a closed subscheme) of $\mathcal{A}$ such that the zero section $S \to \mathcal{A}$ factors through $\mathcal{B}$ and the morphisms $\mathcal{B} \times_{S} \mathcal{B} \to \mathcal{A}$ and $\mathcal{B} \to \mathcal{A}$ that are induced by the addition and inversion morphism of $\mathcal{A}$ respectively factor through $\mathcal{B}$. An irreducible subgroup scheme $\mathcal{B}$ of $\mathcal{A}$ is called an abelian subscheme if $\mathcal{B}\to S$ is flat, proper, and dominant.
\end{defn}

Equivalently, an abelian subscheme is an irreducible closed subgroup scheme that is flat over $S$. An abelian subscheme $\mathcal{B}$ is itself an abelian scheme over $S$: For each natural number $N$, the multiplication-by-$N$ morphism from $\mathcal{B}$ to $\mathcal{B}$ is dominant and proper, hence surjective. It follows that the geometric fibers of $\mathcal{B}$ must be connected as desired.

\begin{lem}\label{lem:genericfiber}
Let $S$ be a normal variety and $\mathcal{A} \to S$ an abelian scheme, both defined over an algebraically closed field $K$. Let $\xi$ be the generic point of $S$. Suppose that every $l$-torsion point of $(\mathcal{A}_\xi)_{\overline{K(S)}}$ is $K(S)$-rational for some natural number $l \geq 3$, where $\overline{K(S)}$ denotes a fixed algebraic closure of $K(S)$. Then every abelian subvariety $B$ of $(\mathcal{A}_\xi)_{\overline{K(S)}}$ is the geometric generic fiber of an abelian subscheme $\mathcal{B} \subset \mathcal{A}$.
\end{lem}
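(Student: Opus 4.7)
My strategy splits into descent, extension, and verification of flatness. For descent, I would apply Silverberg's theorem on fields of definition for homomorphisms of abelian varieties. Our hypothesis that every $l$-torsion point of $(\mathcal{A}_\xi)_{\overline{K(S)}}$ is $K(S)$-rational for some $l\geq 3$ is exactly its hypothesis applied to $\mathcal{A}_\xi$ over $K(S)$, and the conclusion is that every endomorphism of $(\mathcal{A}_\xi)_{\overline{K(S)}}$ is already defined over $K(S)$. Since $\mathcal{A}_\xi$ inherits a principal polarization from $\mathcal{A}\to S$, Poincar\'e complete reducibility writes $B$ as the image $\alpha((\mathcal{A}_\xi)_{\overline{K(S)}})$ of some endomorphism $\alpha$; this $\alpha$ then descends to an endomorphism, still denoted $\alpha$, of $\mathcal{A}_\xi$ over $K(S)$ whose image $B_0\subset\mathcal{A}_\xi$ satisfies $(B_0)_{\overline{K(S)}}=B$.

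For extension, I would invoke the rigidity of abelian schemes: for an abelian scheme over a normal noetherian base, endomorphisms of the generic fiber extend uniquely to global endomorphisms (one can see this by taking the scheme-theoretic closure in $\mathcal{A}\times_S\mathcal{A}$ of the graph of the generic endomorphism and checking, via Zariski's main theorem, that the first projection becomes an isomorphism, using that $\mathcal{A}$ is normal). This yields an extension $\tilde\alpha:\mathcal{A}\to\mathcal{A}$ of $\alpha$. Define $\mathcal{B}$ as the scheme-theoretic image of $\tilde\alpha$. Since $\tilde\alpha$ is proper, $\mathcal{B}$ is a closed subscheme of $\mathcal{A}$; since $\mathcal{A}$ is irreducible (being smooth over the irreducible $S$), so is $\mathcal{B}$; the homomorphism structure of $\tilde\alpha$ makes $\mathcal{B}$ a closed subgroup scheme containing the zero section; and the generic fiber of $\mathcal{B}$ is $\alpha(\mathcal{A}_\xi)=B_0$, hence its geometric generic fiber is $B$.

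The main obstacle is verifying flatness of $\mathcal{B}\to S$. The key point is that the characteristic polynomial of $\tilde\alpha_s$ acting on the $\ell$-adic Tate module $T_\ell(\mathcal{A}_s)$ is invariant in $s\in S$: specialization of endomorphisms over a normal base is injective and preserves this polynomial, so $\dim\tilde\alpha_s(\mathcal{A}_s)$ is constantly equal to $\dim B_0$. Since $\mathcal{B}$ is reduced (being the scheme-theoretic image of a reduced scheme under a proper morphism) and set-theoretically equal to $\tilde\alpha(\mathcal{A})$, each fiber $\mathcal{B}_s$ has the same dimension $\dim B$. Constant fiber dimension, combined with the normality of $S$, the irreducibility and subgroup scheme structure of $\mathcal{B}$, and the smoothness of $\mathcal{A}\to S$, then yields flatness of $\mathcal{B}\to S$ via standard flatness criteria for subgroup schemes of abelian schemes over normal bases. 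This makes $\mathcal{B}$ an abelian subscheme of $\mathcal{A}$ in the sense of Definition~\ref{def:abeliansubscheme} with geometric generic fiber $B$, concluding the proof.
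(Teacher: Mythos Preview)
Your overall strategy is sound and takes a genuinely different route from the paper: both arguments descend an endomorphism via Silverberg's theorem and extend it over the normal base $S$ via rigidity, but you realize $B$ as the \emph{image} of an endomorphism $\tilde\alpha$ while the paper realizes it as the identity component of a \emph{kernel}. Your approach buys you irreducibility of $\mathcal{B}$ and connectedness of its fibers for free, so you avoid the paper's additional step of showing that $\ker(\Psi)^0$ is closed (handled there by exhibiting it as the image of multiplication by a large $N$).

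Two points need correction. First, nothing in the hypotheses gives a principal polarization on $\mathcal{A}\to S$; fortunately Poincar\'e reducibility needs none, so simply drop that clause. Second, and more seriously, your constancy-of-fiber-dimension argument is not right as stated: the characteristic polynomial of $\tilde\alpha_s$ on $T_\ell$ does \emph{not} determine $\dim\tilde\alpha_s(\mathcal{A}_s)$ (a nilpotent endomorphism of $E\times E$ has characteristic polynomial $t^4$ regardless of its rank). The clean fix is the semicontinuity argument the paper uses: applied to $\ker(\tilde\alpha)\to S$ it gives $\dim\ker(\tilde\alpha)_s\geq\dim\ker(\tilde\alpha)_\xi$, hence $\dim\tilde\alpha_s(\mathcal{A}_s)\leq\dim B$, while semicontinuity for $\mathcal{B}\to S$ gives the reverse inequality. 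Finally, your appeal to ``standard flatness criteria'' is too vague---miracle flatness does not apply since $S$ is only normal. What works is precisely the result the paper invokes: each $\mathcal{B}_s$ is a group scheme over a field of characteristic~$0$, hence smooth by Cartier, so with constant fiber dimension Corollaire~4.4 of Expos\'e~VIB in SGA3 gives that $\mathcal{B}^0$ is representable and smooth over $S$; since the fibers of $\mathcal{B}$ are connected and $\mathcal{B}$ is reduced, $\mathcal{B}^0=\mathcal{B}$.
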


\begin{proof}
Fix an abelian subvariety $B$ of $(\mathcal{A}_\xi)_{\overline{K(S)}}$. It is a consequence of the Poincar\'{e} reducibility theorem that there exists an endomorphism $\psi$ of $(\mathcal{A}_\xi)_{\overline{K(S)}}$ such that $B$ is the irreducible component of $\ker(\psi)$ containing the neutral element. By Theorem 2.4 in \cite{MR1154704}, every endomorphism of $(\mathcal{A}_\xi)_{\overline{K(S)}}$ is the base change of an endomorphism of $\mathcal{A}_\xi$. It follows that every abelian subvariety of $(\mathcal{A}_\xi)_{\overline{K(S)}}$ is the base change of an abelian subvariety of $\mathcal{A}_\xi$. We identify $\psi$ and $B$ with the corresponding endomorphism and abelian subvariety of $\mathcal{A}_\xi$ respectively.

By Theorem 3.2.1(iii) in \cite{MR3729254} (cf. Th\'eor\`eme 8.8.2(i) in \cite{EGA_4_3}), the endomorphism $\psi$ spreads out to a morphism from $\mathcal{A}_U = \mathcal{A} \times_S U$ to itself for $U \subset S$ open and dense. This morphism has to be an endomorphism by Corollary 6.4 on p.~117 of \cite{MR1304906}. By Proposition 2.7 in Chapter I of \cite{MR1083353}, as $S$ is normal, this endomorphism extends to an endomorphism $\Psi: \mathcal{A}\to \mathcal{A}$.

We get a closed subgroup scheme $\ker(\Psi)$ of $\mathcal{A}$. Let $\ker(\Psi)^0$ be the functor defined in Section 3 of Expos\'e VIB in \cite{SGA3}. We want to apply Corollaire 4.4 from Expos\'e VIB in \cite{SGA3} by verifying condition (ii).

Fix $s \in S$. The fiber $\ker(\Psi)_s$ is an algebraic group over a field (of characteristic $0$ as always) and hence smooth. Since $\Psi$ is proper, $\Psi(\mathcal{A})$ is a closed irreducible subscheme of $\mathcal{A}$ and it follows from the Fiber Dimension Theorem (Lemma 14.109 in \cite{MR2675155}) applied to the morphism $\Psi(\mathcal{A}) \to S$ that $\dim \Psi(\mathcal{A}_s) \geq \dim \Psi(\mathcal{A}_\xi)$. Similarly, we have $\dim \ker(\Psi)_s \geq \dim B$. Furthermore, we have $\dim \mathcal{A}_s = \dim \ker(\Psi)_s + \dim \Psi(\mathcal{A}_s)$. But then it follows from $\dim \Psi(\mathcal{A}_\xi) + \dim B = \dim \mathcal{A}_\xi = \dim \mathcal{A}_s$ that $\dim \ker(\Psi)_s = \dim B$. This means that the function $s \mapsto \dim \ker(\Psi)_s$ is constant on $S$.

Therefore, we can apply Corollaire 4.4 of Expos\'e VIB in \cite{SGA3} to find that $\ker(\Psi)^0$ is represented by an open subgroup scheme of $\ker(\Psi)$, which we also denote by $\ker(\Psi)^0$. By the same Corollaire, $\ker(\Psi)^0$ is smooth over $S$. By definition, the generic fiber of $\ker(\Psi)^0$ is equal to $B$.

The number of geometrically irreducible components of the fibers of $\ker(\Psi)$ is uniformly bounded. Therefore, for some large $N$, we have that $\ker(\Psi)^0$ is equal to the image of $\ker(\Psi)$ under the multiplication-by-$N$ morphism. As this morphism is proper, it follows that $\ker(\Psi)^0$ is closed in $\mathcal{A}$ and therefore the morphism $\ker(\Psi)^0 \to S$ is proper. Since $\ker(\Psi)^0$ is smooth over $S$, it is flat over $S$. As its generic fiber is irreducible and it is flat over $S$, $\ker(\Psi)^0$ is irreducible as well by Proposition 2.3.4(iii) in \cite{MR0199181} and Section 2.1.8 of Chapter 0 of \cite{EGA_I}. Since $\ker(\Psi)^0$ is a subgroup scheme that dominates $S$, this shows that $\ker(\Psi)^0$ is an abelian subscheme of $\mathcal{A}$ with generic fiber equal to $B$ as desired.
\end{proof}

Lemma \ref{lem:genericfiber} does not hold if the base variety $S$ and the abelian scheme $\mathcal{A}$ are allowed to be arbitrary. We provide a non-trivial counterexample: We let $S \subset \mathbb{A}_K^1\backslash\{0,1\} \times_K \mathbb{A}_K^1$ be defined by the equation $(\lambda+1)^2\lambda = \mu^2$ in the affine coordinates $(\lambda,\mu)$ on $\mathbb{A}_K^2$. Let $\xi$ be the generic point of $S$. We consider two elliptic schemes $\mathcal{E}$ and $\mathcal{E}'$ over $S$ that are defined in $\mathbb{P}_K^2 \times_K S$ by equations $y^2z = x(x-z)(x-\lambda z)$ and $\lambda y'^2z' = x'(x'-z')(x'-\lambda z')$ in the projective coordinates $[x:y:z]$ and $[x':y':z']$ respectively. We set $\mathcal{A} = \mathcal{E} \times_S \mathcal{E}'$ and let $B$ be the abelian subvariety of $\mathcal{A}_\xi \subset \mathbb{P}_{K(S)}^2 \times_{K(S)} \mathbb{P}_{K(S)}^2$ defined by the equations $xz' = x'z$ and $yz' = \frac{\mu}{\lambda+1}y'z$. If $p = (-1,0) \in S(K) \subset \mathbb{A}_K^2(K) = K^2$ is the singular point of $S$, then $B$ extends to an abelian subscheme of $\mathcal{A} \times_S (S\backslash\{p\})$, but the fiber over $p$ of the closure of $B$ in $\mathcal{A}$ has two irreducible components.

\begin{lem}\label{lem:torsionconstant}

Let $S$ be a normal variety and $\mathcal{A} \to S$ an abelian scheme, both defined over an algebraically closed field $K$. Let $\xi$ be the generic point of $S$. Suppose that every $l$-torsion point of $(\mathcal{A}_\xi)_{\overline{K(S)}}$ is $K(S)$-rational for some natural number $l \geq 3$, where $\overline{K(S)}$ denotes a fixed algebraic closure of $K(S)$. Let $N$ be a fixed natural number.

Then there exists a normal variety $S'$ over $K$ with generic point $\eta$ and a finite surjective \'etale morphism $S' \to S$ such that the following hold for $\mathcal{A}' = \mathcal{A} \times_S S'$ (after fixing an algebraic closure $\overline{K(S')}$ of $K(S')$):

\begin{enumerate}
\item Every torsion point of $(\mathcal{A}'_\eta)_{\overline{K(S')}}$ of order $N$ is the geometric generic fiber of a torsion section $S' \to \mathcal{A}'$.
\item Let $(T,\Tr)$ denote the $\overline{K(S')}/K$-trace of $(\mathcal{A}'_\eta)_{\overline{K(S')}}$. Then $\Tr\left(T_{\overline{K(S')}}\right)$ is the geometric generic fiber of an abelian subscheme $\mathcal{T}$ of $\mathcal{A}'$ that is isomorphic (as an abelian scheme) to $T \times_K S'$.
\end{enumerate}

\end{lem}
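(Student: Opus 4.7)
The plan is to construct $S'$ as a finite surjective étale cover of $S$ that trivializes the $N$-torsion subgroup scheme of $\mathcal{A}$; part (1) will then follow by construction, while part (2) will follow by combining Lemma \ref{lem:genericfiber} with a descent argument based on Theorem 2.4 in \cite{MR1154704} applied to the product abelian scheme $\mathcal{A}' \times_{S'}(T \times_K S')$.

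For the construction, note that in characteristic $0$ the multiplication-by-$N$ morphism on the abelian scheme $\mathcal{A} \to S$ is finite étale, so $\mathcal{A}[N] \to S$ is a finite étale cover. I would take $S' \to S$ to be a finite surjective étale cover on which $\mathcal{A}[N]\times_S S'$ becomes a trivial cover (a disjoint union of copies of $S'$); such a cover exists by standard étale-cover theory, for example as the Galois closure of $\mathcal{A}[N]\to S$. Since étale morphisms preserve normality, $S'$ is normal. The components of $\mathcal{A}[N]\times_S S' \hookrightarrow \mathcal{A}'$ then give $N$-torsion sections $S' \to \mathcal{A}'$ whose values at the geometric generic point exhaust the $N$-torsion of $(\mathcal{A}'_\eta)_{\overline{K(S')}}$, proving (1). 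Enlarging $N$ if necessary so that $l \mid N$, all $l$-torsion of $\mathcal{A}'_\eta$ is also $K(S')$-rational, so Lemma \ref{lem:genericfiber} applies to $\mathcal{A}' \to S'$.

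For (2), Lemma \ref{lem:genericfiber} produces an abelian subscheme $\mathcal{T} \subset \mathcal{A}'$ whose geometric generic fiber equals $\Tr(T_{\overline{K(S')}})$. To identify $\mathcal{T}$ with $T \times_K S'$, set $\mathcal{B} = \mathcal{A}' \times_{S'}(T \times_K S')$, an abelian scheme over $S'$. Since $T$ is defined over the algebraically closed field $K$, the $l$-torsion of $T$ lies in $T(K) \subset T(K(S'))$, and combined with (1) this shows that every $l$-torsion point of $\mathcal{B}_\eta$ over $\overline{K(S')}$ is already $K(S')$-rational. The endomorphism $(a,t) \mapsto (\Tr(t),0)$ of $\mathcal{B}_\eta \otimes \overline{K(S')}$ therefore descends by Theorem 2.4 in \cite{MR1154704} to an endomorphism of $\mathcal{B}_\eta$, and reading off the first component gives a homomorphism $\Tr_0 : T_{K(S')} \to \mathcal{A}'_\eta$ whose base change to $\overline{K(S')}$ equals $\Tr$. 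As $\Tr$ has trivial kernel, so does $\Tr_0$, and its image agrees with $\mathcal{T}_\eta$.

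Finally, by spreading $\Tr_0$ out to an open dense subset of $S'$ and extending to all of $S'$ using normality via Proposition 2.7 in Chapter I of \cite{MR1083353} (exactly as in the proof of Lemma \ref{lem:genericfiber}), I obtain a homomorphism $T \times_K S' \to \mathcal{A}'$ of abelian schemes which factors through $\mathcal{T}$ and is an isomorphism on the generic fiber; by the rigidity of abelian schemes (a generically bijective proper homomorphism between abelian schemes over a connected base is an isomorphism), this gives the desired isomorphism $T \times_K S' \cong \mathcal{T}$. The main obstacle is the descent step, which crucially requires both the $l$-torsion trivialization arranged in (1) and the fact that $T$ is defined over the base field $K$; once these hypotheses are in place, the extension from the generic point and the final identification of abelian schemes are essentially formal consequences of the techniques already used in Lemma \ref{lem:genericfiber}.
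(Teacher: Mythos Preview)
Your proof is correct and follows essentially the same route as the paper: trivialize $\mathcal{A}[N]$ by a finite \'etale cover to obtain (1), then for (2) apply Lemma \ref{lem:genericfiber}, descend $\Tr$ to $K(S')$ via Silverberg's Theorem 2.4 (your product trick with $\mathcal{B}$ is just the standard reduction of Hom to End), and spread out using normality and Proposition 2.7 of \cite{MR1083353} exactly as in the proof of Lemma \ref{lem:genericfiber}. One unnecessary step: you need not arrange $l \mid N$, since the $l$-torsion of $\mathcal{A}'_\eta$ is automatically $K(S')$-rational (it was already $K(S)$-rational by hypothesis and $K(S) \subset K(S')$).
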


\begin{proof}
Let $\epsilon: S \to \mathcal{A}$ be the zero section, let $g$ denote the relative dimension of $\mathcal{A} \to S$ and let $[N]: \mathcal{A} \to \mathcal{A}$ denote the multiplication-by-$N$ morphism. The morphism $[N]^{-1}(\epsilon(S)) \to S$ is finite and \'etale since $\epsilon$ is a closed embedding and $[N]$ is finite and \'etale by Proposition 20.7 in \cite{Milne}.

Using Proposition 2.3.4(iii) in \cite{MR0199181}, we deduce that every irreducible component of $[N]^{-1}(\epsilon(S)) \to S$ surjects onto $S$. Furthermore, $[N]^{-1}(\epsilon(S))$ is normal by \cite{stacks-project}, Tag 033C, as it is \'etale over the normal variety $S$. Hence, no two distinct irreducible components of $[N]^{-1}(\epsilon(S)) \to S$ can intersect each other. Therefore, every irreducible component of $[N]^{-1}(\epsilon(S))$ is finite and \'etale over $S$.

We can then achieve (1) by successively base changing to irreducible components of $[N]^{-1}(\epsilon(S))$ which have degree $> 1$ over $S$ and using at the end that a finite \'etale morphism of degree $1$ is an isomorphism. We take $S'$ to be the base variety we end up with.

For (2), we then first use Lemma \ref{lem:genericfiber} to identify $\Tr\left(T_{\overline{K(S')}}\right)$ with the geometric generic fiber of an abelian subscheme $\mathcal{T}$ of $\mathcal{A}'$. Second, we note that $\Tr$ is the base change of a homomorphism $T_{K(S')} \to \mathcal{A}'_\eta$ by Theorem 2.4 in \cite{MR1154704}. We denote this homomorphism also by $\Tr$. Third, arguing as in the proof of Lemma \ref{lem:genericfiber}, we can extend the induced isomorphism of abelian varieties $T_{K(S')} \to \Tr\left(T_{K(S')}\right)$ to an isomorphism of abelian schemes $T \times_K S' \to \mathcal{T}$.
\end{proof}

\subsection{Defect and optimality}

Here we show that extending the field of definition cannot give new optimal subvarieties.

\begin{lem}\label{lem:basechange}
	Let $K \subset L$ be an extension of algebraically closed fields. Let $A$ be an abelian variety defined over $K$ and let $V$ be a subvariety of $A$. If $W$ is an optimal subvariety for $V_L$ in $A_L$, then there exists an optimal subvariety $W'$ for $V$ in $A$ such that $W = (W')_L$ and $\delta(W) = \delta(W')$.
\end{lem}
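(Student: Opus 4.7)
The plan is to define $W'$ as the smallest closed $K$-subvariety of $V$ whose base change to $L$ contains $W$ (equivalently, the reduced scheme-theoretic image of $W$ under the projection $\pi: V_L \to V$). Since $W$ is integral, $W'$ is an irreducible closed subvariety of $V$; by construction $W \subset W'_L$ and $\dim W \leq \dim W'$. As $K$ is algebraically closed, $W'$ is geometrically integral, so $W'_L$ is irreducible. The bulk of the argument consists in showing that these two dimensions in fact agree, from which $W = W'_L$ follows, and then in transferring the defect and the optimality from $W$ to $W'$.

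The key input is that every special subvariety of $A_L$ is the base change of a special subvariety of $A$. Torsion points of $A_L$ are all $K$-rational since $K$ is algebraically closed, and every abelian subvariety $B \subset A_L$ descends to $A$: by Poincar\'e reducibility, $B$ is the identity component of the kernel of some endomorphism of $A_L$, and by Theorem 2.4 in \cite{MR1154704} this endomorphism is the base change of an endomorphism of $A$ (exactly as used in the proof of Lemma \ref{lem:genericfiber}). Writing $\langle W\rangle = S_L$ for some special $S \subset A$, and noting that a special $T \subset A$ contains $W'$ if and only if $T_L \supset W$ (by the defining property of $W'$), one obtains $S = \langle W'\rangle$; the same argument applied to $W'_L$ in place of $W$ gives $\langle W'_L\rangle = \langle W'\rangle_L = \langle W\rangle$. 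Consequently
\[ \delta(W'_L) = \dim \langle W\rangle - \dim W' \leq \dim \langle W\rangle - \dim W = \delta(W), \]
with equality if and only if $\dim W = \dim W'$. Strict inequality would yield an irreducible subvariety $W \subsetneq W'_L \subset V_L$ with $\delta(W'_L) < \delta(W)$, contradicting the optimality of $W$ for $V_L$ in $A_L$; hence $\dim W = \dim W'$, giving both $W = W'_L$ and $\delta(W') = \delta(W)$.

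For the optimality of $W'$ for $V$ in $A$, take any subvariety $U$ of $V$ with $W' \subsetneq U$. Then $U_L \subset V_L$ is irreducible and strictly contains $W = W'_L$, and the descent argument above applied to $U_L$ gives $\langle U_L \rangle = \langle U \rangle_L$ and hence $\delta(U_L) = \delta(U)$; the optimality of $W$ forces $\delta(U) = \delta(U_L) > \delta(W) = \delta(W')$, as required. The one place where some care is needed is in establishing that special subvarieties of $A_L$ descend to $A$; once this is in hand, the rest of the proof is essentially formal.
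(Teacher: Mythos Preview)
Your proof is correct and rests on the same key observation as the paper's: every special subvariety of $A_L$ is the base change of a special subvariety of $A$. The paper is slightly more direct---it notes that an optimal $W$ is an irreducible component of $V_L \cap \langle W\rangle = (V \cap H)_L$ for some special $H \subset A$, and such components are automatically base changes of components of $V \cap H$---whereas you reach the same conclusion via the scheme-theoretic image and a dimension comparison forced by optimality; the arguments are minor reorganizations of one another.
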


\begin{proof}
	Since $A$ is defined over $K$, which is algebraically closed, we have that any special subvariety of $A_L$ is the base change of a special subvariety of $A$. Therefore, if $V$ is a subvariety of $A$, any optimal subvariety for $V_L$ in $A_L$ is an irreducible component of an intersection $V_L\cap H_L$ for some special subvariety $H$ of $A$ and is then the base change of a subvariety $W\subset V$ that must be optimal for $V$ in $A$ and of the same defect.
\end{proof}

We also introduce a new kind of defect.

\begin{defn}\label{def:defectmsv} Let $K \subset L$ be an extension of algebraically closed fields. Let $A$ be an abelian variety defined over $L$ with $L/K$-trace $(T,\Tr)$.
For a subvariety $V$ of $A$ we define 
 $\langle V\rangle_{K,geo}$ to be the smallest translate of an abelian subvariety of $A$ by a point in $\Tr(T({K}))+A_{\tors}$ that contains $V$. We call $K$-geodesic defect the difference $\delta_{K,geo}(V)=\dim \langle V \rangle_{K,geo} -\dim V$. If $W\subset V \subset A$, we say that $W$ is $K$-geodesic-optimal for $V$ in $A$ if $\delta_{K,geo}(U) > \delta_{K,geo}(W)$ for every subvariety $U$ with $W \subsetneq U \subset V$. 
\end{defn}

In the case $K=L$, we drop $K$ in the notation as we are considering usual weakly special subvarieties, the geodesic defect, and geodesic optimality as defined in \cite{MR3552014}.

Note that $A$ is isogenous to $T_L \times B$ for an abelian variety $B$ such that there exists no non-trivial homomorphism between $T_L$ and $B$. Therefore the intersection of two translates of abelian subvarieties of $A$ by points in $\Tr(T({K}))+A_{\tors}$ is again a finite union of translates of abelian subvarieties of $A$ by points in $\Tr(T({K}))+A_{\tors}$, so $\langle V \rangle_{K,geo}$ is well defined.

In \cite{MR3552014}, Habegger and Pila defined the defect condition for subvarieties of complex abelian varieties. If ${W}$ and ${V}$ are subvarieties of $A$ such that ${W} \subset {V}$, then the condition says that $\delta({V}) - \delta_{geo}({V}) \leq \delta({W}) - \delta_{geo}({W})$. They then showed that the fact that this holds in this setting implies that optimal subvarieties are also geodesic-optimal. Here we do the same for $\delta_{K,geo}$ in place of $\delta_{geo}$ and show that optimal subvarieties are also $K$-geodesic-optimal.

\begin{lem}\label{lem:defectcondition}
	In the setting of Definition \ref{def:defectmsv}, let ${W}$ and ${V}$ be subvarieties of $A$ such that ${W} \subset {V}$. Then, the following hold:
	\begin{enumerate}
		\item We have $\delta({V}) - \delta_{K,geo}({V}) \leq \delta({W}) - \delta_{K,geo}({W})$.
		\item If ${W} \subset {V}$ is optimal for ${V}$ in $A$, then it is $K$-geodesic-optimal for ${V}$ in $A$.
	\end{enumerate}
\end{lem}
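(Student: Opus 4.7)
The plan for (1) is to pass to the quotient $\pi : A \to A/B_V$ by the abelian subvariety $B_V$ underlying $\langle V \rangle_{K,geo}$, which collapses $V$ to a single point and reduces the inequality to a comparison of torsion cosets in the quotient. Write $\langle V \rangle_{K,geo} = v_0 + B_V$ and $\langle V \rangle = t_V + C_V$ with $v_0 \in \Tr(T(K)) + A_{\tors}$ and $t_V \in A_{\tors}$, and similarly $\langle W \rangle_{K,geo} = w_0 + B_W$, $\langle W \rangle = t_W + C_W$. Since torsion translations are admissible in the $K$-geodesic minimization, the four minimality statements give $B_V \subset C_V$ and $B_W \subset C_W$, while $W \subset V$ yields $C_W \subset C_V$ and $B_W \subset B_V$; in particular $B_W \subset B_V \cap C_W$.

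Set $p = \pi(v_0) \in A/B_V$; then $\pi(V) = \pi(W) = \{p\}$ because $V \subset v_0 + B_V = \pi^{-1}(p)$. I would then show that $\pi(\langle V \rangle)$ is the smallest torsion coset in $A/B_V$ containing $p$. Given any torsion coset $H' \subset A/B_V$ through $p$, lift a torsion translation point of $H'$ to a torsion point of $A$ (possible by divisibility of $B_V$), so that $\pi^{-1}(H')$ is itself a torsion coset in $A$; since $\pi^{-1}(H') \supset \pi^{-1}(p) \supset V$, it contains $\langle V \rangle$, whence $\pi(\langle V \rangle) \subset H'$. Now $\pi(\langle W \rangle)$ is also a torsion coset containing $p$, so it contains $\pi(\langle V \rangle)$, and computing dimensions with the fiber formula for $\pi$ restricted to $\langle V \rangle$ and $\langle W \rangle$ (using $B_V \subset C_V$) yields
\[
\dim C_W - \dim(B_V \cap C_W) \;=\; \dim \pi(\langle W \rangle) \;\geq\; \dim \pi(\langle V \rangle) \;=\; \dim C_V - \dim B_V.
\]
Combining with $\dim B_W \leq \dim(B_V \cap C_W)$ gives $\dim C_V - \dim C_W \leq \dim B_V - \dim B_W$, which is precisely (1) after expanding the defects.

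For (2), if some $W \subsetneq U \subset V$ violated $K$-geodesic-optimality, i.e.\ $\delta_{K,geo}(U) \leq \delta_{K,geo}(W)$, then (1) applied to the pair $W \subset U$ would give $\delta(U) - \delta_{K,geo}(U) \leq \delta(W) - \delta_{K,geo}(W)$, hence $\delta(U) \leq \delta(W)$, contradicting the optimality of $W$ for $V$ in $A$. This is the same formal step as in \cite{MR3552014}.

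The main technical point is verifying that torsion cosets in the quotient $A/B_V$ pull back to torsion cosets in $A$. This relies on nothing more than divisibility of the abelian subvariety $B_V$, but it is exactly the ingredient that prevents a direct deduction of the $K$-geodesic defect condition from the classical geodesic version of Habegger and Pila; everything else in the argument is a routine adaptation of their proof.
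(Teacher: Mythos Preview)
Your argument is correct and is essentially the explicit version of what the paper does: the paper simply says that (1) reduces to $\dim\langle V\rangle - \dim\langle V\rangle_{K,geo} \le \dim\langle W\rangle - \dim\langle W\rangle_{K,geo}$ and that this follows by copying the proof of Proposition~4.3(ii) in \cite{MR3552014}, and your quotient-by-$B_V$ computation is precisely that proof carried out in the $K$-geodesic setting. Your derivation of (2) from (1) is likewise the same formal step the paper attributes to Proposition~4.5 of \cite{MR3552014}.
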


\begin{proof}
	
	The deduction of (2) from (1) is done analogously to the proof of Proposition 4.5 in \cite{MR3552014}.
	
	It remains to prove (1): the defect condition in this case amounts to proving that
	\[\dim \langle V \rangle - \dim \langle V \rangle_{K,geo} \leq \dim \langle W \rangle - \dim \langle W \rangle_{K,geo}.\]
	For this, we can just copy the proof of Proposition 4.3(ii) in \cite{MR3552014}.
\end{proof}

The geodesic defect of a subvariety of the connected mixed Shimura variety $(\mathfrak{A}_{g,l})_{\mathbb{C}}$, which we define below, is linked to the $\mathbb{C}$-geodesic defect of the components of its geometric generic fiber.

\begin{lem}\label{lem:defectdefinition}
Suppose that $\mathcal{U}$ is a subvariety of $(\mathfrak{A}_{g,l})_{\mathbb{C}}$ and $S$ is its image under the projection to $(A_{g,l})_{\mathbb{C}}$. Let $U$ be an irreducible component of the fiber of $\mathcal{U}$ over the geometric generic point of $S$. We define the geodesic defect $\delta_{geo}(\mathcal{U})$ of $\mathcal{U}$ to be $\dim \langle S \rangle_{geo} - \dim S + \dim \langle U \rangle_{\mathbb{C},geo}-\dim U$, where $\langle S \rangle_{geo}$ is the smallest weakly special subvariety of $(A_{g,l})_{\mathbb{C}}$ (see \cite{UY11} for a definition and a characterization) that contains $S$.
This definition of geodesic defect is independent of the choice of $U$ and agrees with $\delta_{ws}$ as defined in Definition 8.1(i) in \cite{G18}.
\end{lem}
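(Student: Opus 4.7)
The plan is to verify the two assertions separately: first, independence of the choice of $U$; second, agreement with Gao's $\delta_{ws}$.

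For independence, I would first note that since $\mathcal{U}$ is irreducible and surjects onto $S$, the geometric generic fiber $\mathcal{U}_{\eta}$ has pure dimension $\dim \mathcal{U} - \dim S$, so $\dim U$ does not depend on the choice of component. For the dimension of the $\mathbb{C}$-geodesic closure, I would use that the absolute Galois group $\Gamma = \Gal(\overline{\mathbb{C}(S)}/\mathbb{C}(S))$ acts transitively on the irreducible components of $\mathcal{U}_{\eta}$. Writing $A = (\mathfrak{A}_{g,l})_{\eta}$ with $\overline{\mathbb{C}(S)}/\mathbb{C}$-trace $(T,\Tr)$, the set $\Tr(T(\mathbb{C})) + A_{\tors}$ is $\Gamma$-invariant, by the canonicity of the trace (Definition \ref{def:trace}) and because torsion is defined algebraically. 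Hence any $\sigma \in \Gamma$ carries $\langle U \rangle_{\mathbb{C},geo}$ to a translate of an abelian subvariety by a point in this set containing $\sigma(U)$; by minimality and symmetry, $\dim \langle U \rangle_{\mathbb{C},geo}$ is independent of the chosen component.

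For the agreement with $\delta_{ws}$, I would invoke the description of weakly special subvarieties of the mixed Shimura variety $(\mathfrak{A}_{g,l})_{\mathbb{C}}$ given in \cite{G18}. Every such subvariety $\mathcal{W}$ projects to a weakly special subvariety $S_{\mathcal{W}}$ of $(A_{g,l})_{\mathbb{C}}$, and the geometric generic fiber of $\mathcal{W} \to S_{\mathcal{W}}$ is a translate of an abelian subvariety of the geometric generic fiber of the abelian scheme $(\mathfrak{A}_{g,l})_{\mathbb{C}} \to (A_{g,l})_{\mathbb{C}}$ restricted to $S_{\mathcal{W}}$, by a point lying in the sum of the image of the corresponding trace and the torsion subgroup. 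Applying this to the smallest weakly special subvariety $\mathcal{W}^{*}$ containing $\mathcal{U}$, I expect that $S_{\mathcal{W}^{*}}$ must equal $\langle S \rangle_{geo}$ by minimality, and the geometric generic fiber of $\mathcal{W}^{*}$ over $\langle S \rangle_{geo}$ must be the minimal translate of the type above containing $U$, hence equals $\langle U \rangle_{\mathbb{C},geo}$. Summing dimensions gives
\[\delta_{ws}(\mathcal{U}) = \dim \mathcal{W}^{*} - \dim \mathcal{U} = (\dim \langle S \rangle_{geo} - \dim S) + (\dim \langle U \rangle_{\mathbb{C},geo} - \dim U),\]
matching the formula in the statement.

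The main obstacle will be translating Gao's Shimura-theoretic structure theorem into the trace-theoretic language of Definition \ref{def:defectmsv}, i.e.\ identifying the fiberwise part of weakly special subvarieties over weakly special bases with $\mathbb{C}$-geodesic closures inside the geometric generic fiber. Once this dictionary is in place — which requires careful bookkeeping of the various algebraic closures and the role of the level structure $l \geq 3$ in making torsion fiberwise-constant — the computation of dimensions is routine.
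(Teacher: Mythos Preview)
Your plan matches the paper's proof closely. For independence, the paper argues exactly as you do: the components of the geometric generic fiber form a single $\Gal(\overline{\mathbb{C}(S)}/\mathbb{C}(S))$-orbit, and $\langle\,\cdot\,\rangle_{\mathbb{C},geo}$ commutes with this action.

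For agreement with $\delta_{ws}$ the paper follows the same outline but with one refinement you should incorporate. As written, you compare $U$, which lives in the fiber over the geometric generic point of $S$, with the geometric generic fiber of $\mathcal{W}^{*}$ over $\langle S\rangle_{geo}$; these sit over different base points, so ``the geometric generic fiber of $\mathcal{W}^{*}$ over $\langle S\rangle_{geo}$ \dots\ equals $\langle U\rangle_{\mathbb{C},geo}$'' does not typecheck. The paper avoids this mismatch by invoking Proposition~5.3 of \cite{G182}, which factors $\dim\mathcal{U}^{biZar}-\dim\mathcal{U}$ as $(\dim\langle S\rangle_{geo}-\dim S)+(\dim\mathcal{W}-\dim\mathcal{U})$, where $\mathcal{W}$ is the smallest \emph{generically special subvariety of sg type} of $(\mathfrak{A}_{g,l})_{\mathbb{C}}\times_{(A_{g,l})_{\mathbb{C}}} S$ containing $\mathcal{U}$. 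Since this $\mathcal{W}$ lives over $S$, its geometric generic fiber can be compared directly with $U$. The remaining equality $\dim\mathcal{W}-\dim\mathcal{U}=\dim\langle U\rangle_{\mathbb{C},geo}-\dim U$ is then proved as two inequalities: one by restricting $\mathcal{W}$ to the geometric generic fiber, the other by spreading out $\langle U\rangle_{\mathbb{C},geo}$ via Lemmas~\ref{lem:genericfiber} and~\ref{lem:torsionconstant} (after a finite surjective base change to make the base normal). This spreading-out step is precisely the ``main obstacle'' you identified.
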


\begin{proof}
The independence from the choice of $U$ follows from the fact that the irreducible components of the fiber of $\mathcal{U}$ over the geometric generic point of $S$ form one orbit under the action of the Galois group $\Gal\left(\overline{\mathbb{C}(S)}/\mathbb{C}(S)\right)$ and $\langle \cdot \rangle_{\mathbb{C},geo}$ commutes with the action of $\Gal\left(\overline{\mathbb{C}(S)}/\mathbb{C}(S)\right)$.

In Definition 8.1(i) in \cite{G18}, the defect $\delta_{ws}(\mathcal{U})$ is defined as $\dim \mathcal{U}^{biZar} - \dim \mathcal{U}$, where $\mathcal{U}^{biZar}$ is the smallest bi-algebraic subvariety of $(\mathfrak{A}_{g,l})_{\mathbb{C}}$ containing $\mathcal{U}$. By Proposition 5.3 in \cite{G182}, this is equal to $\dim \langle S \rangle_{geo} - \dim S + \dim \mathcal{W} -\dim \mathcal{U}$, where $\mathcal{W}$ is the smallest generically special subvariety of sg type (as defined in Definition 1.5 in \cite{G182}) of $(\mathfrak{A}_{g,l})_{\mathbb{C}} \times_{(A_{g,l})_{\mathbb{C}}} S$ containing $\mathcal{U}$.

It is enough to show that $\dim \langle U \rangle_{\mathbb{C},geo}-\dim U = \dim \mathcal{W} -\dim \mathcal{U}$. By looking at the geometric generic fiber of $\mathcal{W}$, we see that $\dim \langle U \rangle_{\mathbb{C},geo}-\dim U \leq \dim \mathcal{W} -\dim \mathcal{U}$. For the inequality in the other direction to hold, we need to know that after a finite surjective base change we can extend abelian subvarieties and torsion points of the geometric generic fiber and $\mathbb{C}$-points of the trace to abelian subschemes, torsion sections, and constant sections respectively. This is ensured by Lemma \ref{lem:genericfiber} and Lemma \ref{lem:torsionconstant}; note that after a finite surjective base change we can assume the base to be normal.
\end{proof}

\begin{defn}\label{def:geodesicoptimalmsv}
If $\mathcal{W} \subset \mathcal{V}$ are subvarieties of $(\mathfrak{A}_{g,l})_{\mathbb{C}}$, we say that $\mathcal{W}$ is geodesic-optimal for $\mathcal{V}$ in $(\mathfrak{A}_{g,l})_{\mathbb{C}}$ if $\delta_{geo}(\mathcal{U}) > \delta_{geo}(\mathcal{W})$ for every subvariety $\mathcal{U}$ with $\mathcal{W} \subsetneq \mathcal{U} \subset \mathcal{V}$.
 \end{defn}
 
\section{A statement in the universal family}
 
The following result is a fundamental tool for our proof. It relies on a result of Gao, which is formulated in the language of mixed Shimura varieties. We show that, in the special setting of an abelian variety that is the geometric generic fiber of a ``subfamily'' of the universal family, Gao's result yields a strengthening of what is sometimes called a ``Structure Theorem'' proven by R\'emond for abelian varieties in \cite{MR2534482}. The analogous statement for powers of the multiplicative group was proven by Poizat in Corollaire 3.7 in \cite{Poizat} and independently by Bombieri, Masser and Zannier in \cite{BMZ07}. Recall that $A_{g,l}$ is a variety over $\bar{\mathbb{Q}}$.
 
\begin{thm}\label{thm:gao}
Let $S$ be a subvariety of $A_{g,l}$ and $\mathcal{A} = \mathfrak{A}_{g,l} \times_{A_{g,l}} S$. Let $A$ be the geometric generic fiber of $\mathcal{A}$ and $V$ a subvariety of $A$ and let $(T,\Tr)$ be the $K/\bar{\mathbb{Q}}$-trace of $A$, where $K$ is a fixed algebraic closure of $\bar{\mathbb{Q}}(S)$. There is a finite set of pairs $(q_0,H)$, where $q_0 \in A(K)$ is a torsion point and $H$ is an abelian subvariety of $A$, and a finite union $Z$ of proper subvarieties of $V$ such that for every optimal $W \subset V$ one of the following holds:
\begin{enumerate}
\item $W$ is contained in $Z$, or
\item there exists some point $t\in \Tr(T(\bar{\mathbb{Q}}))$ such that $W$ is an irreducible component of $(t + q_0 + H) \cap V$ and $t+q_0+H \subset \langle W \rangle$.
\end{enumerate}
\end{thm}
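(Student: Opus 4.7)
The approach is to spread $V$ out to a subvariety $\mathcal V$ of (a suitable base change of) $\mathcal A$, apply Gao's structure theorem for subvarieties of the mixed Shimura variety $(\mathfrak A_{g,l})_{\mathbb C}$, and descend the outcome to $V$. First, fix an embedding $\bar{\mathbb Q}(S)\hookrightarrow\mathbb C$; by Lemma~\ref{lem:basechange} optimal subvarieties of $V$ in $A$ over $K$ correspond bijectively, with the same defect, to optimal subvarieties of $V_{\mathbb C}$ in $A_{\mathbb C}$ over $\mathbb C$, so it suffices to prove the statement over $\mathbb C$. By Lemma~\ref{lem:torsionconstant}, after replacing $S$ by a finite \'etale cover one may assume that $S$ is normal, the trace $T$ realises as a constant abelian subscheme $\mathcal T\cong T\times_{\bar{\mathbb Q}}S$ of $\mathcal A$, and torsion sections of a sufficiently large fixed order extend globally. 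Spread $V_{\mathbb C}$ out to an irreducible subvariety $\mathcal V\subset\mathcal A_{\mathbb C}$ that dominates $S_{\mathbb C}$ and has $V_{\mathbb C}$ as an irreducible component of its geometric generic fibre.

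Given an optimal $W\subset V_{\mathbb C}$, Lemma~\ref{lem:defectcondition}(2) yields that $W$ is $\bar{\mathbb Q}$-geodesic-optimal, so $\langle W\rangle_{\bar{\mathbb Q},\mathrm{geo}}$ has the form $t+q_0+H$ with $t\in\Tr(T(\bar{\mathbb Q}))$, $q_0\in A_{\mathrm{tors}}$ and $H\subset A$ an abelian subvariety. Let $\mathcal W\subset\mathcal V$ be a spread-out of $W$ dominating $S$. Lemma~\ref{lem:defectdefinition} decomposes the geodesic defect of $\mathcal W$ in $(\mathfrak A_{g,l})_{\mathbb C}$ as $\delta_{\mathrm{geo}}(S_{\mathbb C})+\delta_{\mathbb C,\mathrm{geo}}(W)$. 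Comparing strict enlargements of $\mathcal W$ inside $\mathcal V$ with strict enlargements of $W$ inside $V_{\mathbb C}$ (absorbing into the exceptional set the enlargements of $\mathcal W$ that fail to dominate $S$), one deduces that $\mathcal W$ is geodesic-optimal for $\mathcal V$ in $(\mathfrak A_{g,l})_{\mathbb C}$ in the sense of Definition~\ref{def:geodesicoptimalmsv}.

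Now apply Gao's structure theorem in \cite{G18} to $\mathcal V$: it produces finitely many generically special subvarieties $\mathcal Y_1,\dots,\mathcal Y_r$ of $(\mathfrak A_{g,l})_{\mathbb C}$ and a proper closed subset $\mathcal Z\subsetneq\mathcal V$ such that every geodesic-optimal $\mathcal W\subset\mathcal V$ either lies in $\mathcal Z$ or is an irreducible component of $\mathcal V\cap\mathcal Y_j$ whose special closure contains $\mathcal Y_j$. Thanks to the trivialisations of the first step, the geometric generic fibre of each $\mathcal Y_j$ decomposes into cosets of the shape $t+q_0+H$ where $H$ comes from an abelian subscheme of $\mathcal A$, $q_0$ is the value at the generic point of a fixed torsion section, and $t$ ranges over $\Tr(T(\bar{\mathbb Q}))$ as the constant $\bar{\mathbb Q}$-rational sections of $\mathcal T$. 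This supplies the required finite set of pairs $(q_0,H)$; take $Z$ to be the descent to $V$ of the geometric generic fibre of $\mathcal Z$, together with any proper subvariety of $V$ produced by the spreading-out step. For any optimal $W\not\subset Z$, the associated $\mathcal W$ is an irreducible component of $\mathcal V\cap\mathcal Y_j$, so on restriction to the generic fibre $W$ is a component of $(t+q_0+H)\cap V$ for some $t\in\Tr(T(\bar{\mathbb Q}))$, and the inclusion of $\mathcal Y_j$ into the special closure of $\mathcal W$ descends to $t+q_0+H\subset\langle W\rangle$. The main obstacle is the second step: transferring $\bar{\mathbb Q}$-geodesic-optimality of $W$ to true geodesic-optimality of $\mathcal W$ in the mixed Shimura variety, separating horizontal and vertical contributions via Lemma~\ref{lem:defectdefinition}, and confirming that Gao's generically special $\mathcal Y_j$ restrict in the geometric generic fibre precisely to cosets with $t\in\Tr(T(\bar{\mathbb Q}))$ and not to the larger class given by $\Tr(T(\mathbb C))$.
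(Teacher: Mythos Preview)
Your overall strategy---spread out, invoke Gao, descend---matches the paper's, but the execution has a genuine gap in the field setup that undermines the later steps.

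The problem is the first move. You fix an embedding $K\hookrightarrow\mathbb C$ (extending $\bar{\mathbb Q}(S)\hookrightarrow\mathbb C$) and then propose to ``spread $V_{\mathbb C}$ out to $\mathcal V\subset\mathcal A_{\mathbb C}$ dominating $S_{\mathbb C}$ with $V_{\mathbb C}$ a component of its geometric generic fibre.'' But once you embed $K$ in $\mathbb C$, the abelian variety $A_{\mathbb C}$ sits over a \emph{closed} $\mathbb C$-point of $S_{\mathbb C}$, not over the generic point; the geometric generic fibre of $\mathcal A_{\mathbb C}\to S_{\mathbb C}$ lives over $\overline{\mathbb C(S_{\mathbb C})}$, a strictly larger field. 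So $V_{\mathbb C}$ cannot be a component of the geometric generic fibre of any $\mathcal V$ dominating $S_{\mathbb C}$, and the spreading-out as stated is incoherent. The paper fixes this by working over $K'=\overline{\mathbb C(S_{\mathbb C})}$, into which \emph{both} $K$ and $\mathbb C$ embed, with $K\cap\mathbb C=\bar{\mathbb Q}$ inside $K'$; then $V_{K'}$ really is a component of the geometric generic fibre of its closure $\mathcal V\subset\mathcal A_{\mathbb C}$.

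This correct field diagram is also what makes your two acknowledged obstacles tractable. First, the exceptional set $Z$ does not arise from ``enlargements of $\mathcal W$ that fail to dominate $S$'' but from Galois conjugates: the geometric generic fibre of $\mathcal V$ is $\bigcup_\sigma\sigma(V_{K'})$ for $\sigma\in\Gal(K'/\mathbb C(S_{\mathbb C}))$, and an enlargement $\mathcal U\supsetneq\mathcal W$ may have the component of its generic fibre through $W_{K'}$ lying in some $\sigma(V_{K'})\neq V_{K'}$. The paper's $Z$ is exactly $\bigcup_{\sigma(V_{K'})\neq V_{K'}}Z_\sigma$, and only after excluding this case does one get that $\mathcal W$ is geodesic-optimal for $\mathcal V$. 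Second, the descent of $t$ from $\Tr(T(\mathbb C))$ to $\Tr(T(\bar{\mathbb Q}))$---which you rightly flag---uses precisely that $\Tr^{-1}(W+(-q_0+H))$ is simultaneously defined over $K$ and, via the constant-section description, the base change of something over $\mathbb C$; the identity $K\cap\mathbb C=\bar{\mathbb Q}$ inside $K'$ then forces it down to $\bar{\mathbb Q}$ (Corollaire~4.8.11 in \cite{MR0199181}). Your setup, with $K\subset\mathbb C$, gives no such leverage. Finally, note that the comparison with Lemma~\ref{lem:defectdefinition} requires $\mathbb C$-geodesic-optimality (for the $K'/\mathbb C$-trace), not $\bar{\mathbb Q}$-geodesic-optimality; the two notions agree because $T'=T_{\mathbb C}$, but this identification again depends on the correct field picture.
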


\begin{proof}
We want to apply a result of Gao (Theorem 8.2 in \cite{G18}). For this, we need to make a base change.
	Let $K'$ be a fixed algebraic closure of the function field $\mathbb{C}(S_\mathbb{C})$ of $S_{\mathbb{C}}$. Both $K$ and $\mathbb{C}$ embed into $K'$ and the intersection of their images is $\bar{\mathbb{Q}}$ since the transcendence degree of $K'/\mathbb{C}$ is equal to the transcendence degree of $K/\bar{\mathbb{Q}}$.
	
Let $W$ be optimal for $V$ in $A$. We want to show that $W_{K'}$ is optimal for $V_{K'}$ in $A_{K'}$. If $U$ is an optimal subvariety for $V_{K'}$ containing $W_{K'}$ and satisfying $\delta(U) \leq \delta(W_{K'})$, then, by Lemma \ref{lem:basechange}, $U$ is the base change of a subvariety of $V$ of the same defect that is optimal for $V$ in $A$. Because of the optimality of $W$ for $V$ in $A$, that subvariety has to be equal to $W$, so $U = W_{K'}$.

By Lemma \ref{lem:defectcondition}, $W_{K'}$ is also $\mathbb{C}$-geodesic-optimal for $V_{K'}$ in $A_{K'}$.

Suppose first that $W_{K'} \subset \sigma(V_{K'})$ for some $\sigma \in \Gal(K'/\mathbb{C}(S_{\mathbb{C}}))$ with $\sigma(V_{K'}) \neq V_{K'}$. Then $W_{K'}$ is contained in $V_{K'} \cap \sigma(V_{K'}) \subsetneq V_{K'}$. Let $Z_{\sigma} \subset A$ be maximal among all finite unions of subvarieties $Z' \subset A$ with $Z'_{K'} \subset V_{K'}\cap \sigma(V_{K'})$ (and equal to the closure of the union of all such $Z'$). Then $Z_{\sigma}$ is a finite union of proper subvarieties of $V$ and contains $W$. We set $Z = \cup_{\sigma(V_{K'}) \neq V_{K'}}{Z_{\sigma}}$; the union is finite since $\sigma(V_{K'})$ varies in a finite set. We deduce that $W \subset Z$, so (1) is satisfied.

From now on, we assume that $W_{K'} \subset \sigma(V_{K'})$ only holds if $\sigma(V_{K'}) = V_{K'}$ (for $\sigma \in \Gal(K'/\mathbb{C}(S_{\mathbb{C}}))$), and we want to prove that (2) holds.

The subvarieties $W_{K'}$ and $V_{K'}$ are irreducible components of the base change of subvarieties of the generic fiber of $\mathcal{A}_{\mathbb{C}}$. We define $\mathcal{W}$ and $\mathcal{V}$ to be the closures of these two subvarieties in $\mathcal{A}_{\mathbb{C}}$. Note that they are subvarieties of dimension $\dim S +\dim W$ and $\dim S +\dim V$ respectively and that they dominate $S_\mathbb{C}$.
	
In Lemma \ref{lem:defectdefinition}, we defined the geodesic defect of subvarieties of $(\mathfrak{A}_{g,l})_\mathbb{C}$ and we have seen that it coincides with $\delta_{ws}$ of \cite{G18}. Let $\mathcal{U}\subset \mathcal{V}$ be a geodesic-optimal subvariety for $\mathcal{V}$ that contains $\mathcal{W}$ and satisfies $\delta_{geo}(\mathcal{U}) \leq \delta_{geo}(\mathcal{W})$. Using that $\mathcal{W} \subset \mathcal{U}$, we find that there exists an irreducible component $U$ of the geometric generic fiber of $\mathcal{U}$ that contains $W_{K'}$ and satisfies
\begin{multline*}
\delta_{\mathbb{C},geo}(U) = \delta_{geo}(\mathcal{U}) - \dim \langle S \rangle_{geo} + \dim S \\
\leq \delta_{geo}(\mathcal{W}) - \dim \langle S \rangle_{geo} + \dim S = \delta_{\mathbb{C},geo}(W_{K'}).
\end{multline*}
Since $\mathcal{U} \subset \mathcal{V}$, we can deduce that $U \subset \sigma(V_{K'})$ for some $\sigma \in \Gal(K'/\mathbb{C}(S_{\mathbb{C}}))$. Since $W_{K'} \subset U \subset \sigma(V_{K'})$, we must have $\sigma(V_{K'}) = V_{K'}$ by our assumption from above.

Since $W_{K'}$ is $\mathbb{C}$-geodesic-optimal for $V_{K'}$ in $A_{K'}$, it follows that $W_{K'} = U$ and therefore $\mathcal{W} = \mathcal{U}$. Hence, $\mathcal{W}$ is geodesic-optimal for $\mathcal{V}$ in the connected mixed Shimura variety $(\mathfrak{A}_{g,l})_{\mathbb{C}}$.

Let $\mathcal{W}^{biZar}$ be the smallest bi-algebraic subvariety of $(\mathfrak{A}_{g,l})_{\mathbb{C}}$ that contains $\mathcal{W}$. It is determined by a tuple $(Q,\mathcal{Y}^{+},N,\tilde{y})$, where $(Q,\mathcal{Y}^{+})$ is a connected mixed Shimura subdatum of $(\GSp_{2g} \ltimes \mathbb{Q}^{2g},\mathbb{H}_{g} \times \mathbb{C}^g)$, $N$ is a normal subgroup of the derived subgroup $Q^{\der}$, and $\tilde{y} \in \mathcal{Y}^{+}$. Here $\mathbb{H}_g$ denotes the Siegel upper half space. Thanks to Theorem 8.2 in \cite{G18}, we know that the triple $(Q,\mathcal{Y}^{+},N)$ lies in a finite set that does not depend on $\mathcal{W}$. By Proposition 5.3 in \cite{G182}, $\mathcal{W}^{biZar}$ is a generically special subvariety of sg type (as defined in Definition 1.5 in \cite{G182}) of $(\mathfrak{A}_{g,l})_{\mathbb{C}} \times_{(A_{g,l})_{\mathbb{C}}} \pi(\mathcal{W}^{biZar})$, where $\pi: (\mathfrak{A}_{g,l})_{\mathbb{C}} \to (A_{g,l})_{\mathbb{C}}$ is the structural morphism, so up to finite surjective base change a translate of an abelian subscheme by a torsion section and a constant section.

Looking at the proof of Proposition 3.3 on p. 240 of \cite{G15}, we see that the abelian subscheme and the torsion section are uniquely determined by $(Q,\mathcal{Y}^{+},N)$. Note that $\tilde{y}_G$ in the proof of Proposition 3.3 in \cite{G15} can be assumed fixed as $\pi(\mathcal{W}) = S_{\mathbb{C}}$ is independent of $\mathcal{W}$. After intersecting $\mathcal{W}^{biZar}$ with $\mathcal{A}_{\mathbb{C}}$ and passing to the geometric generic fiber, we deduce from this together with Lemma \ref{lem:defectdefinition} that there exists a finite set of tuples $(q_0,H)$, where $q_0 \in A(K)$ is a torsion point and $H$ is an abelian subvariety of $A$, such that there exists some point $t\in \Tr'(T'(\mathbb{C}))$ with $\langle W_{K'}\rangle_{\mathbb{C},geo} = t + (q_0 + H)_{K'}$. Here, $(T',\Tr')$ is the $K'/\mathbb{C}$-trace of $A_{K'}$.

	First of all, $\Tr': T'_{K'} \to A_{K'}$ is the base change of a homomorphism $T'_{K\mathbb{C}} \to A_{K\mathbb{C}}$ by Theorem 2.4 in \cite{MR1154704}, because all torsion points of domain and codomain are $K\mathbb{C}$-rational. Hence, $(T',\Tr')$ is equal to the base change of the $K\mathbb{C}/\mathbb{C}$-trace of $A_{K\mathbb{C}}$. Furthermore, this latter trace is equal to $(T_{\mathbb{C}},\Tr_{K\mathbb{C}})$ by Theorem 6.8 in \cite{MR2255529}. It follows that $T' = T_{\mathbb{C}}$ and $\Tr' = \Tr_{K'}$.
	
	Since it is $\mathbb{C}$-geodesic-optimal for $V_{K'}$ in $A_{K'}$, $W_{K'}$ itself must be equal to an irreducible component of $(t+(q_0 + H)_{K'}) \cap V_{K'}$.
	
	We now want to show that we can take $t$ to be the image of the base change of a $\bar{\mathbb{Q}}$-rational point of $T$.
	Indeed, the image of any point in $X=\Tr_{K'}^{-1} \left(W_{K'}+(-q_0 + H)_{K'}\right) $ that is the base change of a $\mathbb{C}$-rational point of $T_{\mathbb{C}}$ can be chosen as $t$. The finite union of subvarieties $X$ is equal to the base change of $\Tr^{-1}(W+(-q_0+H)) \subset T_{K}$. On the other hand, one can see that $X$ is equal to $\Tr_{K'}^{-1}\left(t+H_{K'}\right)$. Since $\Tr$ is a homomorphism and every algebraic subgroup of $T_{K'}$ is the base change of an algebraic subgroup of $T$, this means that $X$ is the base change of a union of translates of an abelian subvariety of $T_{\mathbb{C}}$ by a point in $T_{\mathbb{C}}(\mathbb{C})$. Since $\mathbb{C} \cap K = \bar{\mathbb{Q}}$, it follows from Corollaire 4.8.11 in \cite{MR0199181} that $X$ is equal to the base change of a union of algebraic subvarieties of $T$ and $t$ can be chosen as the image of the base change of a point of $T(\bar{\mathbb{Q}})$. If we denote this point also by $t$, we have that $W$ is an irreducible component of $(t+q_0+H) \cap V$. Since $(t + q_0 + H)_{K'} = \langle W_{K'} \rangle_{\mathbb{C},geo} \subset \langle W \rangle_{K'}$, it also follows that $t+q_0+H \subset \langle W \rangle$.
\end{proof}

We now apply Theorem \ref{thm:gao} to our problem.

\begin{prop}\label{prop:mixedaxschanuel}
	Let $S \subset A_{g,l}$ be a subvariety of positive dimension. Let $\mathcal{A} = \mathfrak{A}_{g,l} \times_{A_{g,l}} S$ and let $A$ be the geometric generic fiber of $\mathcal{A} \to S$. If $\ZP (B, m,d)$ holds for all quotients $B$ of $A$ by a positive-dimensional abelian subvariety, then $\ZP (A, m,d)$ holds.
\end{prop}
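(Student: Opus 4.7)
The plan is to argue by induction on $m$, the base case $m = 0$ being trivial. For the inductive step, fix $V \subset A$ with $\dim V \leq m$ and assume without loss of generality that $\dim V = m$. Apply Theorem~\ref{thm:gao} to $V$ to obtain a finite set $\mathcal{F}$ of pairs $(q_0, H)$ and a finite union $Z$ of proper subvarieties of $V$. Any optimal $W \subset V$ with $\delta(W) \leq d$ either (i) lies in $Z$, or (ii) is an irreducible component of $(t + q_0 + H) \cap V$ for some $(q_0, H) \in \mathcal{F}$ and some $t \in \Tr(T(\bar{\mathbb{Q}}))$, with $t + q_0 + H \subset \langle W \rangle$. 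Case (i) is immediate: $W$ sits in some component $Z_i$ of $Z$ of strictly smaller dimension than $V$, where it remains optimal with defect at most $d$, so the inductive hypothesis gives finiteness.

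For case (ii) with $\dim H > 0$, consider $\pi: A \to A/H$, where $A/H$ is a quotient of $A$ by a positive-dimensional abelian subvariety. By Lemma~\ref{lem:smooth}, pick an open dense $V_0 \subset V$ on which $\pi|_{V_0}$ is smooth onto its image; if $W \cap V_0 = \emptyset$, then $W$ lies in a proper subvariety of $V$ and is handled by induction as in case (i). Otherwise, since $\langle W \rangle \supset t + q_0 + H$ contains a translate of $H$ (the neutral component of $\ker \pi$), Lemma~\ref{lem:isog}(1) yields that $\pi(W) = \{\pi(t + q_0)\}$ is optimal for $\pi(V)$ in $A/H$ with defect at most $d$. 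The hypothesis $\ZP(A/H, m, d)$ (noting $\dim \pi(V) \leq m$) then leaves finitely many possibilities for $\pi(W)$, and since $W$ is an irreducible component of $\pi|_V^{-1}(\pi(W))$, finitely many $W$ arise.

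The subtle case is (ii) with $\dim H = 0$, where $W = \{t + q_0\}$ is a single point with $t \in \Tr(T(\bar{\mathbb{Q}}))$. The plan is to cut out the Zariski-closed subvariety $V' = V \cap (\Tr(T) + q_0) \subseteq V$; all such candidate $W$'s lie in $V'$. If $V' \subsetneq V$, induction on $m$ handles them. The remaining case is $V \subset \Tr(T) + q_0$. Since $\dim S > 0$ and $A_{g,l}$ is a fine moduli space (as $l \geq 3$), the family $\mathfrak{A}_{g,l} \times_{A_{g,l}} S \to S$ cannot be isotrivial, whence $A \not\simeq T_K$, so $\Tr(T_K) \subsetneq A$. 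Poincar\'e reducibility then yields a positive-dimensional abelian subvariety $A'' \subset A$ complementary to $\Tr(T_K)$, with $A/A''$ isogenous to $T_K$. The hypothesis $\ZP(A/A'', m, d)$ together with Lemma~\ref{lem:isog}(2) applied to an isogeny $T_K \to A/A''$ yields $\ZP(T_K, m, d)$. Since $\Tr(T) + q_0$ is itself a special subvariety, the map $U \mapsto \Tr^{-1}(U - q_0)$ induces a defect-preserving, optimality-preserving bijection between subvarieties of $V \subset \Tr(T) + q_0$ and subvarieties of $V'' := \Tr^{-1}(V - q_0) \subset T_K$; hence the finiteness of optimal $W \subset V$ of defect $\leq d$ follows from $\ZP(T_K, m, d)$ applied to $V''$. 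The main obstacle is precisely this $\dim H = 0$ sub-case, where one must extract $\ZP$ for the trace from the hypothesis on quotients, leveraging that $\dim S > 0$ forces the Poincar\'e complement of $\Tr(T_K)$ to be positive-dimensional.
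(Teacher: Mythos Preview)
Your proof is correct and follows essentially the same approach as the paper: induct on $m$, apply Theorem~\ref{thm:gao}, handle the $\dim H > 0$ case via Lemma~\ref{lem:isog}(1) and the quotient hypothesis, and reduce the $\dim H = 0$ case to $\ZP(\Tr(T_K),m,d)$ via Poincar\'e complementarity and Lemma~\ref{lem:isog}(2). The only cosmetic difference is that the paper treats the $\dim H = 0$ case uniformly---applying $\ZP(\Tr(T_K),m,d)$ directly to an irreducible component of $\Tr(T_K)\cap(-q_0+V)$, which has dimension at most $m$ in all cases---whereas you split off the sub-case $V\cap(\Tr(T_K)+q_0)\subsetneq V$ and handle it by the inductive hypothesis on $m$; this extra case distinction is harmless but unnecessary.
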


\begin{proof}
	We induct on $m$. Clearly $\ZP(A,0,d)$ holds for all $d$.
	
	Let $V$ be a subvariety of $A$ of dimension $m$ and let $W \subset V$ be an optimal subvariety of defect at most $d$. Let $(T, \Tr)$ denote the $K/\bar{\mathbb{Q}}$-trace of $A$, where $K$ is an algebraic closure of $\bar{\mathbb{Q}}(S)$. We have $\Tr(T_K) \neq A$ since $\dim S > 0$.
	
	We apply Theorem \ref{thm:gao}. If $W$ satisfies (1), then $W$ is contained in a component of $Z$ and optimal for that component. Furthermore, $W$ has defect at most $d$, so we are done by induction on $m$. If $W$ satisfies (2), then $W$ is an irreducible component of $(t+q_0+H) \cap V$, where $t \in \Tr(T(\bar{\mathbb{Q}}))$ and $(q_0,H)$ lies in a finite set of pairs of torsion points and abelian subvarieties of $A$ that does not depend on $W$. We can assume $H$ and $q_0$ fixed. We now quotient out by $H$. Let $f: A \to A/H$ be the corresponding morphism. We get a subvariety $f(V)$ of $A/H$ and a point $w$ such that $\{w\} = f(W)$. By Lemma \ref{lem:smooth}, we can find $V_0 \subset V$ open and dense such that $f(V_0)$ is open and dense in $f(V)$ and $f|_{V_0}: V_0 \to f(V_0)$ is smooth of some relative dimension $n$.
	
	If $W \subset V\backslash V_0$, then $W$ is contained in one of finitely many subvarieties of $V$ of dimension at most $m-1$ (and of course optimal for that subvariety in $A$ and of defect at most $d$), so we are done by induction. Hence we can assume that $W \cap V_0 \neq \emptyset$. Since $W \cap V_0$ is an irreducible component of $f|_{V_0}^{-1}(\{w\})$, it follows that $n = \dim(W \cap V_0) = \dim W$.
	
	If $\dim H = 0$, then $W = \{t+q_0\}$. So the singleton $\{t\}$ is contained in some irreducible component $V'$ of $\Tr(T_K) \cap (-q_0+V)$. It has defect at most $d$ and is optimal for $V'$ in $\Tr(T_K)$. Since $\Tr(T_K) \neq A$, there is an isogeny between $\Tr(T_K)$ and a quotient of $A$ by some positive-dimensional abelian subvariety. We can use our hypothesis and Lemma \ref{lem:isog}(2) to deduce that $t$ and therefore $W$ belongs to a finite set.
	Hence, we can assume that $\dim H > 0$.

	By Theorem \ref{thm:gao}, we have $t + q_0 + H \subset \langle W \rangle$ and therefore $\langle W \rangle$ contains a translate of a component of $\ker f = H$. It follows from Lemma \ref{lem:isog}(1) that $\{w\}$ has defect at most $d$ and is optimal for $f(V)$ in $A/H$. Now we can use that $\ZP(A/H,m,d)$ holds to deduce that $w$ and hence $W$ as an irreducible component of $f^{-1}(\{w\}) \cap V$ must lie in a finite set.
\end{proof}

\section{Reduction of the transcendence degree}

The following proposition gives us the final reduction to the algebraic case or to what we proved in Proposition \ref{prop:mixedaxschanuel}.

\begin{prop}\label{prop:fieldofdefofcurve}
	Let $m$ and $d$ be non-negative integers. Let $K\subset L$ be an extension of algebraically closed fields. Let $A$ be an abelian variety defined over $K$. If $\ZP (A,m,d)$ holds, then $\ZP (A_L,m,d)$ holds as well.
\end{prop}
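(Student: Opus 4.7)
The plan is a double induction, primarily on $\dim A$ and secondarily on $t := \mathrm{tr.deg.}(L/K)$, after an initial reduction to finite transcendence degree.

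First I would reduce to the case $t < \infty$: any subvariety $V \subset A_L$ is defined over some algebraically closed subfield $L_0 \subset L$ of finite transcendence degree over $K$, and by Lemma \ref{lem:basechange} applied to $L_0 \subset L$, optimal subvarieties of $V$ in $A_L$ correspond to base changes of optimal subvarieties of $V_{L_0}$ in $A_{L_0}$ of the same defect, so it suffices to prove $\ZP(A_{L_0},m,d)$. I would then induct on $t$: the base $t=0$ is trivial, and for $t \geq 1$ one picks an algebraically closed intermediate $L_1$ with $\mathrm{tr.deg.}(L_1/K)=t-1$; the inductive hypothesis gives $\ZP(A_{L_1},m,d)$, reducing to the case $\mathrm{tr.deg.}(L/K)=1$.

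In this transcendence-degree-one step, let $W \subset V \subset A_L$ be optimal of defect at most $d$; by Lemma \ref{lem:defectcondition} $W$ is also $K$-geodesic-optimal. Because $A$ is defined over the algebraically closed field $K$, the $L/K$-trace of $A_L$ is $(A,\mathrm{id})$ and $(A_L)_{\tors} \subset A(K)$, whence $\langle W \rangle_{K,\mathrm{geo}} = w + H_L$ for some $w \in A(K)$ and some abelian subvariety $H$ of $A$. A structure theorem of R\'emond from \cite{MR2534482}, adapted to the $K$-geodesic defect, ensures that $H$ ranges over a finite set depending only on $V$, so one may fix $H$.

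If $\dim H = 0$, then $W = \{w\}$ with $w \in V \cap A(K)$: letting $V_0 \subset A$ be the Zariski closure over $K$ of $V(K)$, one has $(V_0)_L \subset V$, and a short descent argument shows that $\{w\}$ is optimal of defect at most $d$ for $V_0$ in $A$ (any $U \subsetneq V_0$ over $K$ with $w \in U$ and $\delta(U) \leq \delta(\{w\})$ would, after base change and passage to an irreducible component through $w$, violate optimality of $\{w\}$ in $V$), so $\ZP(A,m,d)$ provides finiteness. If $\dim H > 0$, I would translate by $w$: $\widetilde W := W - w \subset H_L$ is $K$-geodesic-optimal for the appropriate component of $(V-w) \cap H_L$ in $H_L$; since the special subvarieties of $H$ are precisely those of $A$ contained in $H$, $\ZP(A,m,d)$ restricts to $\ZP(H,m,d)$, and the primary induction on $\dim A$ applied to the strictly smaller abelian variety $H$ then yields $\ZP(H_L,m,d)$, giving finitely many $\widetilde W$ for each fixed $w$. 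The hardest part is the uniform control over the remaining choices of $w \in A(K)$ in this positive-dimensional case, which is precisely where R\'emond's strengthened structure theorem does the heavy lifting.
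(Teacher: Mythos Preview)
Your plan has genuine gaps in the $\dim H>0$ case, not only the one you flag at the end.

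First, translation by a point $w\in A(K)$ that is not torsion does not preserve the defect $\delta$ or optimality: $\langle W-w\rangle$ bears no simple relation to $\langle W\rangle$. So even granting $\ZP(H_L,m,d)$ by your induction on $\dim A$, you cannot apply it to $\widetilde W=W-w$, because $\widetilde W$ has no reason to be $\delta$-optimal for a component of $(V-w)\cap H_L$ in $H_L$; you only claim $K$-geodesic optimality for $\widetilde W$, and $\ZP$ is a statement about $\delta$. Second, R\'emond's structure theorem yields finitely many abelian subvarieties $H$, never finitely many pairs $(w,H)$; your last sentence acknowledges this gap without closing it. Third, there is no off-the-shelf version of R\'emond's theorem for the $K$-geodesic defect on a subvariety of $A_L$: R\'emond's result concerns the ordinary geodesic defect. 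Since $W$ is also geodesic-optimal, R\'emond over $L$ does give a finite list for the abelian subvariety appearing in $\langle W\rangle_{geo}$, but that is a different (generally smaller) subvariety than your $H$, and using it instead destroys your $\dim H=0$ argument because the translating point then lies only in $A(L)$.

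The paper's argument is structured quite differently. It inducts on $m$, not on $\dim A$, and never works inside a translate $w+H_L$. Instead it passes from $W\subset V$ in $A_L$ to their smallest $K$-envelopes $W'\subset V'$ in $A$ via Lemma~\ref{lem:ausefullemma}. Because every special subvariety of $A_L$ descends to $K$, one has $\langle W\rangle=\langle W'_L\rangle$, so $\delta(W'_L)=\delta(W)-1$ whenever $W\neq W'_L$. One then chooses $U'$ optimal for $V'$ in $A$ containing $W'$ with $\delta(U')\le d-1$, shows $U'\neq V'$, and concludes by the induction on $m$ since $W$ is optimal for a component of $U'_L\cap V\subsetneq V$. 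The finiteness of the possible $U'$ comes from applying R\'emond to $U'\subset V'$ over $K$ (where it directly applies), then \emph{quotienting} $A\to A/H$ rather than restricting, and invoking $\ZP(A/H,m,d)$, which follows from $\ZP(A,m,d)$ via Lemma~\ref{lem:isog}(2).
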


\begin{proof}
	Let $V$ be a subvariety of $A_L$ of dimension at most $m$. We can find an algebraically closed subfield $L_1$ of $L$ that has finite transcendence degree over $K$ and a subvariety $V_1$ of $A_{L_1}$ such that $V = (V_1)_{L}$. If $W$ is any optimal subvariety for $V$ in $A_L$, then by Lemma \ref{lem:basechange} it is equal to $(W_1)_L$ for an optimal subvariety $W_1$ for $V_1$ in $A_{L_1}$ such that $\delta(W_1) = \delta(W)$. Hence, it suffices to prove the proposition under the assumption that $L$ has finite transcendence degree over $K$.

	Arguing by induction on the transcendence degree of $L$ over $K$, one can see that it is enough to prove our statement when $L$ has transcendence degree 1 over $K$.
	
	We proceed by induction on $m$. Clearly $\ZP(A_L,0,d)$ holds for all $d$, so, for some positive $m$, we will deduce $\ZP (A_L, m,d)$ from $\ZP (A_L, m-1,d)$ and $\ZP (A,m,d)$.
	
	Let $V$ be a subvariety of $A_L$ of dimension $m$.	
	If $V=V'_L$ for some $V'\subset A$ then we are done by Lemma \ref{lem:basechange} and $\ZP(A,m,d)$. We will then assume that this is not the case. 
	
	Let $V'_L$ be the smallest subvariety of $A_L$ that is the base change of some $V'\subset A$ and contains $V$. It exists and has dimension $m$ or $m+1$ by Lemma \ref{lem:ausefullemma} but the first case is not possible because it would imply that $V=V'_L$.
	
	Let $W \subset V$ be an optimal subvariety for $V$ in $A_L$ that has defect at most $d$. We can assume without loss of generality that $W \neq V$.
	
	We let $W'_L$ be the smallest subvariety of $A_L$ that is the base change of some $W'\subset A$ and contains $W$. By Lemma \ref{lem:ausefullemma}, we have either $W = W'_L$ or $\dim W'_L = \dim W + 1$.

	If $W = W_L'$, then $W$ is contained in $Z'_L \subset V$ for $Z'\subset A$ maximal among all finite unions of subvarieties $Z'' \subset A$ with $Z''_L \subset V$ (and equal to the closure of the union of all such $Z''$). Since $V\neq V'_L$, the dimension of $Z'_L$ is at most $m-1$. Of course, $W$ is also optimal for the component of $Z'_L$ that contains it and therefore lies in a finite set because $\ZP(A_L, m-1,d)$ holds. We can therefore assume that $W \subsetneq W'_L$ and so $\dim W'_L = \dim W + 1$.
	
	Recall that, by Lemma \ref{lem:basechange}, an optimal subvariety for $V'_L$ in $A_L$ is the base change of an optimal subvariety for $V'$ in $A$. Let $U'_L$ be such an optimal subvariety for $V'_L$ in $A_L$ that contains $W'_L$ and satisfies $\delta(U'_L) \leq \delta(W'_L)$. Note that $\langle W \rangle = \langle W'_L\rangle$ and $\langle V \rangle = \langle V'_L \rangle$ because, for instance, $V'_L\subset\langle V \rangle \cap V'_L $ by definition. It follows that $\delta(W'_L) = \delta(W)-1$, so $\delta(U'_L) \leq d-1$.
	
	We claim that $U'_L \neq V'_L$. If not, we could deduce that $\delta(V'_L) = \delta(U'_L) \leq \delta(W'_L)$. It would then follow that $\delta(V) = \delta(V'_L)+1 \leq \delta(W'_L)+1 = \delta(W)$, which contradicts the optimality of $W \subsetneq V$ for $V$. 
	
	We deduce that $U'_L \subsetneq V'_L$ and hence $U'_L \cap V \subsetneq V$, otherwise $U'_L\supset V$ would contradict the minimality of $V'_L$. Since $W \subset U'_L \cap V$ and $W$ has defect at most $d$ and is optimal for a component of $U'_L \cap V$ in $A_L$, it suffices to show that $U'$ and therefore $U'_L$ belongs to a finite set and then we are done as $\dim(U'_L \cap V) < \dim V$ and $\ZP(A_L,m-1,d)$ holds.
		
	It follows from the optimality of $U'$ for $V'$ in $A$ and Proposition 4.5 in \cite{MR3552014} that $U'$ is also geodesic-optimal for $V'$ in $A$. We can apply the results of R\'{e}mond in \cite{MR2534482} (the connection to geodesic optimality is explained in Section 6 of \cite{MR3552014}) to deduce that there exists a finite set of abelian subvarieties of $A$ such that for each geodesic-optimal $U$ for $V'$ in $A$ there exists $H$ in this finite set such that for any $u \in U(K)$ we have $\langle U \rangle_{geo} = u+H$ (and $U$ is an irreducible component of $(u+H) \cap V'$ since it is geodesic-optimal for $V'$).
	
	Since $H$ varies in a finite set, we can assume it fixed and divide out by it. Let $f: A \to A/H$ be the corresponding morphism. We get a subvariety $f(V')$ of $A/H$ and a point $u'\in (A/H)(K)$ such that $\{u'\} = f(U')$.
	
	By Lemma \ref{lem:smooth}, we can find $V'_0 \subset V'$ open and dense such that $f(V'_0)$ is open and dense in $f(V')$ and $f|_{V'_0}: V'_0 \to f(V'_0)$ is smooth of relative dimension $n = \dim V'_0 - \dim f(V'_0) = \dim V' - \dim f(V')$. If $U'$ is contained in $V'\backslash V'_0$, then $U'$ is contained in one of finitely many subvarieties of $V'$ of dimension at most $m$. As $U'$ is optimal for $V'$, it is also optimal for that subvariety. We have $\delta(U') \leq \delta(W') = \delta(W)-1 \leq d-1$. By $\ZP(A,m,d)$, there are then only finitely many possibilities for $U'$.
	
	Hence we can assume that $U' \cap V'_0 \neq \emptyset$. Since $U'$ is an irreducible component of $(u+H) \cap V'$ (for some $u \in U'(K)$), $U' \cap V'_0$ is then an irreducible component of $(u+ H) \cap V_0' = f|_{V_0'}^{-1}(\{u'\})$. It follows that $n = \dim(U' \cap V'_0) = \dim U'$.
	
	Since we know that $W\neq W'_L$, we have $\dim W' > 0$ and hence $n = \dim U' > 0$.
	
	Note that $\langle U' \rangle$ contains a translate of a component of $\ker f = H$ since $\langle U' \rangle_{geo} = u + H \subset \langle U' \rangle$ (for $u \in U'(K)$ arbitrary). It therefore follows from Lemma \ref{lem:isog}(1) that $\{u'\}$ is optimal for $f(V')$ in $A/H$ and has defect at most $d-1$. Furthermore, $f(V')$ is a subvariety of $A/H$ of dimension $\dim V' - n \leq \dim V' - 1 = m$.
	
Now, there is a homomorphism $A/H \rightarrow A$ of algebraic groups with finite kernel, so $\ZP(A/H,m,d)$ holds by Lemma \ref{lem:isog}(2). Thus, $u'$ lies in a finite set. As $U'$ is a component of $f^{-1}(\{u'\}) \cap V'$, it lies in a finite set as well.
\end{proof}

\section{Proof of Theorem \ref{thm:functionfieldzp}}

Fix non-negative integers $m$ and $d$. We argue by induction on the dimension of $A$. If the dimension of $A$ is at most $1$, the statement holds trivially. Let now $A$ be an abelian variety of dimension $> 1$ over an algebraically closed field $K$ and assume that Theorem \ref{thm:functionfieldzp} holds for the fixed $m$ and $d$ and all abelian varieties of smaller dimension. In particular, it holds for all quotients of $A$ by abelian subvarieties of positive dimension.

Applying Lemma \ref{lem:finemodulispace}, we find a subvariety $S$ of $A_{g,l}$ and an embedding of $\bar{\mathbb{Q}}(S)$ into $K$ such that $A$ is isogenous to $A'_{K}$, where $A'$ is the generic fiber of $\mathfrak{A}_{g,l}\times_{A_{g,l}} S$. 

Moreover, the $K/\bar{\mathbb{Q}}$-traces of $A$ and $A'_K$ are isogenous. Therefore, by Lemma \ref{lem:isog}(2), we only need to prove the statement of Theorem \ref{thm:functionfieldzp} for $A'_{K}$.

Proposition \ref{prop:fieldofdefofcurve} tells us that $\ZP(A'_K,m,d)$ follows from $\ZP (A'_{K'},m,d)$, where $K'$ is an algebraic closure of $\bar{ \mathbb{Q}}(S)$.

If $\dim S=0$, we have nothing to do. We then assume that $S$ has positive dimension.

By the inductive hypothesis we know that for all quotients $B$ of $A'_{K'}$ by a positive-dimensional abelian subvariety with $K'/\bar{ \mathbb{Q}}$-trace $(T_B, \Tr_B)$, the implication
$$
\ZP(T_B, m,d) \implies \ZP(B,m,d)
$$
holds.

Note that the $K/\bar{\mathbb{Q}}$-trace of $A'_K$ is equal to the base change of the $K'/\bar{\mathbb{Q}}$-trace $(T_{A'_{K'}},\Tr_{A'_{K'}})$ of $A'_{K'}$ by Theorem 6.4(3) in \cite{MR2255529}. If we know that $\ZP(T_{A'_{K'}}, m,d)$ holds, then, for all $B$ quotients of $A'_{K'}$, since there exists a homomorphism of algebraic groups with finite kernel from $T_{B}$ to $T_{A'_{K'}}$, $\ZP(T_{B}, m,d)$ holds as well because of Lemma \ref{lem:isog}(2).

The inductive hypothesis then tells us that $\ZP(B,m,d)$ holds for all quotients $B$ of $A'_{K'}$ by a positive-dimensional abelian subvariety and thus, by 
Proposition \ref{prop:mixedaxschanuel}, $\ZP(A'_{K'}, m,d)$ holds as we wanted to prove. \qed

\section{Proof of Theorem \ref{thm:pequalzp}} \label{sec:pequalzp}

We show that the hypotheses in Theorem \ref{thm:pequalzp} imply the following claim for every quotient $A'$ of $A$:

\begin{claim}\label{claim:first}
Every subvariety $V$ of $A'$ of dimension at most $m$ contains at most finitely many optimal singletons (for $V$ in $A'$) of defect at most $d$.
\end{claim}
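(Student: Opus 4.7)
The plan is to deduce Claim \ref{claim:first} from Theorem 9.8(i) of Habegger--Pila in \cite{MR3552014}, after propagating the hypothesis of Theorem \ref{thm:pequalzp} from abelian subvarieties of $A$ to arbitrary quotients of $A$.

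First, I would verify that Conjecture \ref{conj:pink} for subvarieties of dimension at most $m$ in every abelian subvariety of $A$ implies the same statement for every quotient $A'$ of $A$. By Poincar\'e reducibility, for any quotient $A'$ of $A$ there exists an abelian subvariety $B \subseteq A$ such that the composition $\phi \colon B \hookrightarrow A \twoheadrightarrow A'$ is an isogeny. Special subvarieties of $A'$ pull back along $\phi$ to finite unions of special subvarieties of $B$ of the same codimension, and conversely images of special subvarieties under the isogeny $\phi$ are special. If $V \subseteq A'$ is a subvariety of dimension at most $m$ with $V \cap (A')^{[\max\{\dim V + 1,\, \dim A' - d\}]}$ Zariski dense in $V$, then each irreducible component $V'$ of $\phi^{-1}(V)$ has the same dimension as $V$ and satisfies the analogous density hypothesis in $B$. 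The hypothesis of the theorem then forces $V'$ into a proper special subvariety of $B$, and pushing forward under $\phi$ places $V$ in a proper special subvariety of $A'$.

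Second, I would apply Theorem 9.8(i) in \cite{MR3552014} to the quotient $A'$ and the subvariety $V$. Any optimal singleton $\{w\}$ for $V$ in $A'$ of defect at most $d$ satisfies $\dim \langle w \rangle = \delta(\{w\}) \leq d$, so $w \in V \cap (A')^{[\dim A' - d]}$. The cited theorem reduces the finiteness of such optimal singletons to Conjecture \ref{conj:pink} applied to $V$ itself and to certain subvarieties of further quotients of $A'$. Since any quotient of $A'$ is itself a quotient of $A$, the instances of Pink's conjecture required by Theorem 9.8(i) are precisely those supplied by the first step, and the claim follows.

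The main obstacle I anticipate is matching the precise hypotheses of Theorem 9.8(i) with what our first step delivers: one must check that the list of quotients and the dimension bounds appearing in the reduction all fall within the scope of our hypothesis, and one must confirm that Conjecture \ref{conj:pink} transfers cleanly along the isogeny $\phi$, with special attention to the fact that pullbacks of irreducible special subvarieties may decompose into several irreducible components (each of the same codimension), so that the density condition is preserved under $\phi^{-1}$ without dimension loss.
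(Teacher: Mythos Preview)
Your proposal misapplies Theorem 9.8(i) of \cite{MR3552014}. That result takes the finiteness of optimal singletons (the condition $LGO^m_d$ for all quotients of $A$) as its \emph{hypothesis} and deduces the finiteness of all optimal subvarieties of defect at most $d$; it does not go in the direction you describe. In the paper, Theorem 9.8(i) is invoked only \emph{after} Claim \ref{claim:first} has been established, in order to deduce Theorem \ref{thm:pequalzp} from it (this is Theorem \ref{thm:reductiontosingletons}). So your second step is circular: you are assuming what you are trying to prove.

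The paper's argument for Claim \ref{claim:first} is a direct descending induction and does not touch Theorem 9.8(i) at all. First one reduces to $A'=A$ by noting that any quotient admits a homomorphism with finite kernel into $A$, so optimal singletons pull back (cf.\ Lemma \ref{lem:isog}(2)). Then one shows inductively (Claim \ref{claim:second}) that the optimal singletons of defect at most $d$ lie in a finite union of subvarieties of $V$ of dimension at most $\dim V - j$. For the inductive step, take such a subvariety $W$, translate so that $B:=\langle W\rangle$ is an abelian subvariety of $A$, and observe that any optimal singleton $\{p\}\subset W$ of defect at most $d$ lies in $W\cap B^{[k]}$ with $k=\max\{\dim B-d,\dim W+1\}$; since $B=\langle W\rangle$, Conjecture \ref{conj:pink} for $W$ in the abelian \emph{subvariety} $B$ forces this intersection to be non-dense in $W$. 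Taking $j=\dim V$ finishes. Note in particular that the hypothesis on abelian subvarieties is used exactly as stated --- no passage to quotients is needed for this claim, so your first step is also unnecessary here.
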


As every quotient of $A$ admits a homomorphism of algebraic groups with finite kernel to $A$, we can deduce as in the proof of Lemma \ref{lem:isog}(2) that it suffices to prove Claim \ref{claim:first} for $A$ itself.

Let therefore $V$ be a subvariety of $A$ of dimension at most $m$. We show the following claim by induction on $j \in \{0,\hdots,\dim V\}$:

\begin{claim}\label{claim:second}
The optimal singletons for $V$ in $A$ of defect at most $d$ are contained in a finite union of subvarieties of $V$ of dimension at most $\dim V - j$.
\end{claim}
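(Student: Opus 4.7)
The plan is to establish Claim \ref{claim:second} by induction on $j$, with base case $j = 0$ immediate (the single subvariety $V \subset V$ of dimension $\dim V$ contains every optimal singleton). At $j = \dim V$, the claim yields a finite union of subvarieties of dimension $0$, i.e., a finite set of points, which is Claim \ref{claim:first}, and hence, by the reduction already carried out in the text, Theorem \ref{thm:pequalzp}.

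For the inductive step, I assume Claim \ref{claim:second} for some $j < \dim V$ and let $V_1, \ldots, V_r$ be the subvarieties of $V$ of dimension at most $\dim V - j$ whose union contains all optimal singletons for $V$ in $A$ of defect at most $d$. I would treat each $V_i$ separately; only those $V_i$ with $\dim V_i = \dim V - j$ require attention, as the smaller ones already satisfy the bound for $j+1$. The first observation I would record is that any optimal singleton $\{x\}$ for $V$ in $A$ with $\{x\} \subset V_i$ is automatically optimal for $V_i$ in $A$: indeed any $U$ with $\{x\} \subsetneq U \subset V_i$ also lies in $V$, so optimality of $\{x\}$ for $V$ forces $\delta(U) > \delta(\{x\})$.

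I would then invoke Theorem 9.8(i) of \cite{MR3552014} applied to the subvariety $V_i$ of $A$, using the hypothesis of Theorem \ref{thm:pequalzp} that Conjecture \ref{conj:pink} holds for $d$ and subvarieties of dimension at most $m$ in every abelian subvariety of $A$. Since $\dim V_i \leq \dim V \leq m$, the hypothesis is applicable, and the theorem yields that the optimal singletons of defect at most $d$ for $V_i$ in $A$ form a non-Zariski-dense subset of $V_i$, hence are contained in a finite union of proper subvarieties of $V_i$, each of dimension at most $\dim V_i - 1 \leq \dim V - j - 1$. Taking the union over $i = 1, \ldots, r$ gives the bound for $j+1$ and completes the inductive step.

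The main obstacle is the invocation of Theorem 9.8(i) of Habegger and Pila, which is the genuine technical input and the reason the authors describe the proof as a ``direct application''. The subtlety it resolves is that a defect-at-most-$d$ singleton lies in a special subvariety of codimension at least $\dim A - d$, whereas Pink's density statement for $V_i$ requires codimension at least $\max\{\dim V_i + 1, \dim A - d\}$; bridging this gap when $\dim V_i + 1 > \dim A - d$ requires passing to the smallest special subvariety containing $V_i$, translating into a smaller abelian subvariety, and applying Pink's conjecture there recursively, which is precisely why the hypothesis must be assumed for every abelian subvariety of $A$ and not just $A$ itself.
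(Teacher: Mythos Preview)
Your inductive skeleton and the observation that an optimal singleton for $V$ contained in $V_i$ is optimal for $V_i$ are both correct and match the paper. The gap is in the third paragraph: Theorem 9.8(i) of \cite{MR3552014} does not say what you claim. That theorem takes as \emph{input} the property $LGO^m_d$ for every quotient of $A$ --- essentially, the finiteness of optimal singletons of defect at most $d$ in subvarieties of dimension at most $m$ --- and produces as \emph{output} the finiteness of all optimal subvarieties of defect at most $d$. It does not take Pink's conjecture as a hypothesis, and it certainly does not yield non-density of optimal singletons; invoking it here is circular, since finiteness of optimal singletons (Claim~\ref{claim:first}) is exactly what the inductive proof of Claim~\ref{claim:second} is meant to establish.

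What actually proves the inductive step is precisely the argument you sketch in your last paragraph --- and this is what the paper does directly, not via Theorem 9.8(i). One translates $V_i$ so that $B := \langle V_i \rangle$ is an abelian subvariety; an optimal singleton $\{p\}$ for $V_i$ of defect at most $d$ then has $\langle \{p\} \rangle \subset B$, and optimality forces $\dim \langle \{p\} \rangle < \dim B - \dim V_i$, so $p \in V_i \cap B^{[k]}$ with $k = \max\{\dim V_i + 1, \dim B - d\}$. Since $B = \langle V_i \rangle$, the hypothesis of Theorem~\ref{thm:pequalzp} (Conjecture~\ref{conj:pink} for the abelian subvariety $B$) gives that $V_i \cap B^{[k]}$ is not dense in $V_i$, completing the step. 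Theorem 9.8(i) enters only \emph{after} Claim~\ref{claim:first} is proven, in Theorem~\ref{thm:reductiontosingletons}, to upgrade from singletons to arbitrary optimal subvarieties. You have the right argument but have attributed it to the wrong source and placed the black box at the wrong point in the logical chain.
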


This is obvious for $j = 0$.

Suppose that Claim \ref{claim:second} has been proven for some $j < \dim V$. Let $W$ be one of the finitely many subvarieties of $V$ of dimension at most $\dim V - j$ that contain the optimal singletons for $V$ in $A$ of defect at most $d$. We can assume without loss of generality that $\dim W = \dim V -j$.

Any optimal singleton for $V$ in $A$ that is contained in $W$ is also optimal for $W$ in $A$. We want to show that the optimal singletons for $W$ in $A$ of defect at most $d$ are contained in a proper closed subset of $W$. This will establish Claim \ref{claim:second} for $j+1$.

Translating $W$ by a torsion point sends optimal singletons (for $W$ in $A$) to optimal singletons of the same defect, so we can assume without loss of generality that $B := \langle W \rangle$ is an abelian subvariety of $A$.

If $\{p\} \subset W$ is an optimal singleton for $W$ in $A$ of defect at most $d$, then $\langle \{p\} \rangle \subset B$. Since $\dim W = \dim V -j > 0$, we have that $\{p\} \subsetneq W$ and therefore $\delta(\{p\}) = \dim \langle \{p\} \rangle < \delta(W) = \dim B - \dim W$. It follows that the codimension of $\langle \{p\} \rangle$ in $B$ is greater than or equal to $k := \max\{\dim B-d, \dim W + 1\}$. So the optimal singletons for $W$ in $A$ of defect at most $d$ are contained in $W \cap B^{[k]}$.

As $B = \langle W \rangle$, no proper special subvariety of $B$ can contain $W$. It then follows from Conjecture \ref{conj:pink} for $B$, $d$, and $W$ that $W \cap B^{[k]}$ is not dense in $W$. Together with the above, this implies that the optimal singletons for $W$ in $A$ of defect at most $d$ are contained in a proper closed subset of $W$ as desired. This establishes Claim \ref{claim:second} by induction.

Now taking $j = \dim V$ shows that the number of optimal singletons for $V$ in $A$ of defect at most $d$ is finite. This proves Claim \ref{claim:first}. Theorem \ref{thm:pequalzp} now follows from the following theorem:

\begin{thm}\label{thm:reductiontosingletons}
Let $m$ and $d$ be non-negative integers and suppose that every subvariety of dimension at most $m$ of a quotient of $A$ contains at most finitely many optimal singletons of defect at most $d$. Then every subvariety of $A$ of dimension at most $m$ contains at most finitely many optimal subvarieties of defect at most $d$.
\end{thm}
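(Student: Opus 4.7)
The plan is to prove Theorem \ref{thm:reductiontosingletons} by induction on $m$. The base case $m=0$ is trivial: a $0$-dimensional subvariety contains only itself as a subvariety. For the inductive step, let $V \subset A$ be a subvariety of dimension at most $m$, and let $W \subset V$ be an optimal subvariety of defect at most $d$; we need to show that such $W$ belong to a finite set.

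By Proposition 4.5 in \cite{MR3552014}, $W$ is also geodesic-optimal for $V$ in $A$. The results of R\'emond in \cite{MR2534482}, whose connection to geodesic-optimality is explained in Section 6 of \cite{MR3552014}, yield a finite set $\mathcal{H}$ of abelian subvarieties of $A$, depending only on $V$, such that for each such $W$ there exists $H \in \mathcal{H}$ with $\langle W \rangle_{geo} = w + H$ for any $w \in W(K)$. It therefore suffices to fix $H \in \mathcal{H}$ and show that only finitely many $W$ have this associated $H$.

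Let $f \colon A \to A/H$ be the quotient homomorphism, so that $\ker f = H$. By Lemma \ref{lem:smooth}, pick $V_0 \subset V$ open and dense such that $f|_{V_0} \colon V_0 \to f(V_0)$ is smooth and $f(V_0)$ is open and dense in $f(V)$. If $W \subset V \setminus V_0$, then $W$ is contained in one of the finitely many irreducible components of $V \setminus V_0$, each of dimension at most $m-1$, and remains optimal for that component of the same defect, so the inductive hypothesis on $m$ handles this case. Otherwise $W \cap V_0 \neq \emptyset$, and since $\langle W \rangle \supset \langle W \rangle_{geo} = w + H$ contains a translate of $\ker f$, Lemma \ref{lem:isog}(1) shows that $f(W)$ is optimal for $f(V)$ in $A/H$ with defect at most $\delta(W) \leq d$. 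But the inclusion $W \subset w + H$ forces $f(W)$ to be a single point, and $\dim f(V) \leq \dim V \leq m$, so the standing hypothesis applied to the quotient $A/H$ gives that $f(W)$ lies in a finite set. Consequently $W$, being an irreducible component of $f^{-1}(f(W)) \cap V$, also lies in a finite set.

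The main technical inputs — R\'emond's finiteness of associated abelian subvarieties, generic smoothness for the quotient morphism (Lemma \ref{lem:smooth}), and the preservation of optimality and defect under quotients (Lemma \ref{lem:isog}(1)) — are already in place, so no substantial obstacle is anticipated; the argument is essentially a lighter version of the quotient-by-$H$ step in the proof of Proposition \ref{prop:fieldofdefofcurve}, recast in the purely geometric setting of this theorem, and the only point requiring even mild care is to verify that $\langle W\rangle \supset \langle W\rangle_{geo}$ so that the hypothesis of Lemma \ref{lem:isog}(1) is satisfied.
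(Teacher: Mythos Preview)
Your argument is correct. The paper's proof, by contrast, is a two-line citation: it observes that the hypothesis (finitely many optimal singletons in every quotient) trivially implies the condition $LGO^m_d$ of Definition~8.1 in \cite{MR3552014}, and then invokes Theorem~9.8(i) of \cite{MR3552014} as a black box. Your proof instead unpacks that black box, giving a direct induction on $m$ via R\'emond's structure theorem and the quotient-by-$H$ technique already used in Proposition~\ref{prop:fieldofdefofcurve}. In effect you are reproving the relevant special case of Habegger--Pila's Theorem~9.8(i) from scratch, which makes the argument self-contained relative to the present paper's own lemmas and R\'emond's result, at the cost of a longer proof. One small point you gloss over: when you conclude that $W$ is an irreducible component of $f^{-1}(f(W)) \cap V$, this is not automatic from $W \subset w+H$; it holds because geodesic-optimality forces $W$ to be a component of $\langle W\rangle_{geo} \cap V = (w+H) \cap V$, and this deserves a sentence.
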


\begin{proof}
The hypotheses imply that after fixing an arbitrary field of definition that is finitely generated over $\mathbb{Q}$, every quotient of $A$ satisfies $LGO^m_d$ as defined in Definition 8.1 in \cite{MR3552014}. Theorem \ref{thm:reductiontosingletons} then follows from Theorem 9.8(i) in \cite{MR3552014}.
\end{proof}

\section{The Zilber-Pink conjecture for subvarieties of defect $\leq 1$}

\begin{thm}\label{thm:habeggerpila}
Let $A$ be an abelian variety over $\bar{\mathbb{Q}}$ and $V \subset A$ a subvariety. Then $V$ contains at most finitely many optimal subvarieties of defect at most $1$.
\end{thm}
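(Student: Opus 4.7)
The plan is to use Theorem~\ref{thm:reductiontosingletons} to reduce the problem to finiteness of optimal singletons, then to handle singletons of defect $0$ by Manin-Mumford and singletons of defect $1$ via Habegger-Pila's Pink conjecture for curves (Theorem~1.1 of \cite{MR3552014}), together with a double induction on the dimensions of the ambient abelian variety and of the subvariety.

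By Theorem~\ref{thm:reductiontosingletons}, it suffices to show that every subvariety $V$ of every quotient $A'$ of $A$ contains only finitely many optimal singletons of defect at most $1$. Defect-$0$ optimal singletons are isolated torsion points of $V$ and thus finite by Raynaud's theorem \cite{Raynaud}. For a defect-$1$ optimal singleton $\{p\}$, the smallest special subvariety containing $p$ is a special curve $p_0 + E$, where $E \subset A'$ is a one-dimensional abelian subvariety and $p_0$ is a torsion point. Using the optimality of $\{p\}$ together with the fiber-dimension argument from the proof of Proposition~\ref{prop:mixedaxschanuel}, one sees that $q_E|_V \colon V \to A'/E$ must be generically finite (otherwise $V$ would contain a translate of $E$ through $p$, a weakly special subvariety of defect $0$ strictly containing $\{p\}$) and that $q_E(p)$ must be an isolated point of $q_E(V) \cap (A'/E)_{\tors}$ (otherwise a suitable component of the pull-back of a positive-dimensional torsion coset through $q_E(p)$ would give a subvariety $U \supsetneq \{p\}$ in $V$ with $\delta(U) \leq 1$). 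Manin-Mumford applied to $q_E(V) \subset A'/E$ then yields finitely many candidates for $q_E(p)$, and generic finiteness of $q_E|_V$ bounds the number of such $p$ per $E$.

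The main obstacle is to bound the set of one-dimensional abelian subvarieties $E \subset A'$ that arise. I would argue by double induction on $\dim A'$ and on $\dim V$. If $V$ is contained in a proper special subvariety $S \subsetneq A'$, defect and optimality are preserved by the restriction, so we replace $A'$ by $S$ and apply the inductive hypothesis on $\dim A'$. Otherwise $V$ is not in any proper special; note that the optimality of a defect-$1$ singleton in $V$ forces $\delta(V) \geq 2$ and hence $\dim V \leq \dim A' - 2$, so $\dim A' \geq 3$. In the base case $\dim V = 1$, Theorem~1.1 of \cite{MR3552014} implies that $V \cap (A')^{[2]}$ is finite, and since $V \cap (A')^{[\dim A' - 1]} \subset V \cap (A')^{[2]}$ we obtain finiteness of all candidate $p$. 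For $\dim V \geq 2$, the plan is a Bertini-type slicing: consider a generic one-parameter family of curve sections of $V$ not contained in any proper special and apply the curve case to conclude that the optimal defect-$1$ singletons cannot be Zariski dense in $V$; the inductive hypothesis on $\dim V$ then applies to the proper closed subvariety where they lie. This non-density step is the most delicate part of the plan, as it amounts to deducing Pink's conjecture for $d=1$ in higher-dimensional subvarieties from the curve case of Habegger-Pila, and will require a careful genericity argument ensuring that the curve sections remain outside every proper special subvariety.
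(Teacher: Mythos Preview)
Your reduction to optimal singletons via Theorem~\ref{thm:reductiontosingletons}, and the handling of defect-$0$ singletons by Manin--Mumford, are fine. The difficulty, as you yourself flag, lies entirely in the ``non-density step'' for $\dim V \geq 2$, and here the Bertini slicing idea does not work as stated. The optimal defect-$1$ singletons form a countable set of closed points of $V$; a very general curve section of $V$ (over $\bar{\mathbb{Q}}$, say obtained by intersecting with generic hyperplanes in a projective embedding) will simply miss every point of any fixed countable set once $\dim V \geq 2$. So knowing that each such curve has only finitely many points in $(A')^{[2]}$ tells you nothing about whether the defect-$1$ optimal singletons of $V$ are Zariski dense. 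More conceptually, a point $p$ with $\langle\{p\}\rangle$ a one-dimensional torsion coset lies in $(A')^{[\dim A' - 1]}$, and for a generic curve $C$ in $A'$ one expects $C \cap (A')^{[\dim A' - 1]}$ to be \emph{empty} as soon as $\dim A' \geq 3$, for codimension reasons. Thus the curve case of Habegger--Pila gives no leverage here, and the step ``deduce Pink for $d=1$ in higher dimension from the curve case'' is not known and cannot be filled by a genericity argument of this type.

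The paper's proof avoids this entirely by a different, and much shorter, route: it invokes Habegger's bounded height theorem (\cite{MR2495768}, applied with $s=\dim A - 1$) directly for an arbitrary-dimensional $V$. Geodesic-optimality of $\{p\}$ (from Proposition~4.5 of \cite{MR3552014}) ensures that $p$ is not contained in any positive-dimensional coset lying in $V$, which is exactly the non-degeneracy hypothesis needed for Habegger's theorem; one then gets a uniform height bound on all such $p$. This feeds into Proposition~9.7 of \cite{MR3552014} to verify $LGO_1(V)$, and Theorem~9.8(i) of \cite{MR3552014} finishes. So the missing ingredient in your plan is precisely a higher-dimensional bounded-height input, which is an independent deep result and is not a consequence of the curve case.
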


The following proof was suggested to the authors by Philipp Habegger.

\begin{proof}
Let $\{p\} \subset V$ be an optimal singleton for $V$ in $A$, contained in a torsion coset of dimension at most $1$. By Proposition 4.5 in \cite{MR3552014}, $\{p\}$ is geodesic-optimal for $V$ in $A$ and therefore not contained in a coset of positive dimension that is contained in $V$. By the Theorem in \cite{MR2495768} with $s=\dim A -1$, the height of $p$ with respect to any fixed symmetric ample line bundle on $A$ is then bounded.

It then follows from Proposition 9.7 in \cite{MR3552014} that (in the notation of \cite{MR3552014}) $LGO_1(V)$ is satisfied after fixing a number field over which $V$ and $A$ are defined. As $V$ was arbitrary, this implies that every abelian variety $A$ over $\bar{\mathbb{Q}}$ satisfies $LGO^m_1$ for all integers $m \geq 0$; see Definition 8.1 in \cite{MR3552014} for the definitions of $LGO_d(V)$ and $LGO^m_d$. The claim then follows from Theorem 9.8(i) in \cite{MR3552014}.
 \end{proof}

\section*{Acknowledgements}
We thank J\'er\'emy Blanc, Giulio Codogni, Ziyang Gao, Philipp Habegger, Lars K\"uhne, Immanuel van Santen, and Filippo Viviani for relevant and helpful discussions. We thank Philipp Habegger, Jonathan Pila, and Ga\"{e}l R\'emond for comments on a preliminary version of this article. This work was supported by the Swiss National Science Foundation as part of the project ``Diophantine Problems, o-Minimality, and Heights", no. 200021\_165525. The second-named author thanks the Dipartimento di Matematica e Fisica di Roma Tre for its hospitality during a very productive week spent working with the first-named author in Rome.

\bibliographystyle{amsalpha}
\bibliography{Bibliography.bib}

\newcommand{\etalchar}[1]{$^{#1}$}
\def\cprime{$'$}
\providecommand{\bysame}{\leavevmode\hbox to3em{\hrulefill}\thinspace}
\providecommand{\MR}{\relax\ifhmode\unskip\space\fi MR }
\providecommand{\MRhref}[2]{%
  \href{http://www.ams.org/mathscinet-getitem?mr=#1}{#2}
}
\providecommand{\href}[2]{#2}
\begin{thebibliography}{ABD{\etalchar{+}}65}

\bibitem[ABD{\etalchar{+}}65]{SGA3}
M.~Artin, J.~E. Bertin, M.~Demazure, P.~Gabriel, A.~Grothendieck, M.~Raynaud,
  and J.-P. Serre, \emph{Sch\'{e}mas en groupes. {F}asc. 2a: {E}xpos\'{e}s 5 et
  6}, S\'{e}minaire de G\'{e}om\'{e}trie Alg\'{e}brique de l'Institut des
  Hautes \'{E}tudes Scientifiques, vol. 1963/64, Institut des Hautes \'{E}tudes
  Scientifiques, Paris, 1963/1965.

\bibitem[BHMZ10]{BHMZ}
E.~Bombieri, P.~Habegger, D.~Masser, and U.~Zannier, \emph{A note on {M}aurin's
  theorem}, Atti Accad. Naz. Lincei Rend. Lincei Mat. Appl. \textbf{21} (2010),
  no.~3, 251--260.

\bibitem[BMZ99]{BMZ99}
E.~Bombieri, D.~Masser, and U.~Zannier, \emph{Intersecting a curve with
  algebraic subgroups of multiplicative groups}, Int. Math. Res. Not. IMRN
  (1999), no.~20, 1119--1140.

\bibitem[BMZ07]{BMZ07}
\bysame, \emph{Anomalous subvarieties---structure theorems and applications},
  Int. Math. Res. Not. IMRN (2007), no.~19, Art. ID rnm057, 33.

\bibitem[BMZ08]{MR2457263}
\bysame, \emph{On unlikely intersections of complex varieties with tori}, Acta
  Arith. \textbf{133} (2008), no.~4, 309--323.

\bibitem[Car08]{MR2473296}
M.~Carrizosa, \emph{Probl\`eme de {L}ehmer et vari\'{e}t\'{e}s ab\'{e}liennes
  {CM}}, C. R. Math. Acad. Sci. Paris \textbf{346} (2008), no.~23-24,
  1219--1224.

\bibitem[Car09]{MR2533796}
\bysame, \emph{Petits points et multiplication complexe}, Int. Math. Res. Not.
  IMRN (2009), no.~16, 3016--3097.

\bibitem[Con06]{MR2255529}
B.~Conrad, \emph{Chow's {$K/k$}-image and {$K/k$}-trace, and the
  {L}ang-{N}\'{e}ron theorem}, Enseign. Math. (2) \textbf{52} (2006), no.~1-2,
  37--108.

\bibitem[CV14]{CV}
S.~Checcoli and E.~Viada, \emph{On the torsion anomalous conjecture in {CM}
  abelian varieties}, Pacific J. Math. \textbf{271} (2014), no.~2, 321--345.

\bibitem[CVV14]{CVV}
S.~Checcoli, F.~Veneziano, and E.~Viada, \emph{On torsion anomalous
  intersections}, Atti Accad. Naz. Lincei Rend. Lincei Mat. Appl. \textbf{25}
  (2014), no.~1, 1--36.

\bibitem[FC90]{MR1083353}
G.~Faltings and C.~Chai, \emph{Degeneration of abelian varieties}, Ergebnisse
  der Mathematik und ihrer Grenzgebiete (3) [Results in Mathematics and Related
  Areas (3)], vol.~22, Springer-Verlag, Berlin, 1990, With an appendix by David
  Mumford.

\bibitem[Gal10]{galateau2010}
A.~Galateau, \emph{Une minoration du minimum essentiel sur les vari\'et\'es
  ab\'eliennes}, Comment. Math. Helv. \textbf{85} (2010), no.~4, 775--812.

\bibitem[Gao17]{G15}
Z.~Gao, \emph{A special point problem of {A}ndr\'e-{P}ink-{Z}annier in the
  universal family of {A}belian varieties}, Ann. Sc. Norm. Super. Pisa Cl. Sci.
  (5) \textbf{17} (2017), no.~1, 231--266.

\bibitem[Gao18a]{G18}
\bysame, \emph{{\noop{AAA}}{M}ixed {A}x-{S}chanuel for the universal abelian
  varieties and some applications}, https://arxiv.org/pdf/1806.01408.pdf, 2018.

\bibitem[Gao18b]{G182}
\bysame, \emph{Generic rank of {B}etti map and unlikely intersections},
  https://arxiv.org/abs/1810.12929, 2018.

\bibitem[Gro60]{EGA_I}
A.~Grothendieck, \emph{\'{E}l\'{e}ments de g\'{e}om\'{e}trie alg\'{e}brique.
  {I}. {L}e langage des sch\'{e}mas}, Inst. Hautes \'{E}tudes Sci. Publ. Math.
  (1960), no.~4, 5--214.

\bibitem[Gro65]{MR0199181}
\bysame, \emph{\'{E}l\'{e}ments de g\'{e}om\'{e}trie alg\'{e}brique. {IV}.
  \'{E}tude locale des sch\'{e}mas et des morphismes de sch\'{e}mas. {II}},
  Inst. Hautes \'{E}tudes Sci. Publ. Math. (1965), no.~24, 5--223.

\bibitem[Gro66]{EGA_4_3}
\bysame, \emph{\'{E}l\'{e}ments de g\'{e}om\'{e}trie alg\'{e}brique. {IV}.
  \'{E}tude locale des sch\'{e}mas et des morphismes de sch\'{e}mas. {III}},
  Inst. Hautes \'{E}tudes Sci. Publ. Math. (1966), no.~28, 5--248.

\bibitem[GW10]{MR2675155}
U.~G\"{o}rtz and T.~Wedhorn, \emph{Algebraic geometry {I}}, Advanced Lectures
  in Mathematics, Vieweg + Teubner, Wiesbaden, 2010, Schemes with examples and
  exercises.

\bibitem[Hab09]{MR2495768}
P.~Habegger, \emph{Intersecting subvarieties of abelian varieties with
  algebraic subgroups of complementary dimension}, Invent. Math. \textbf{176}
  (2009), no.~2, 405--447.

\bibitem[Har77]{MR0463157}
R.~Hartshorne, \emph{Algebraic geometry}, Springer-Verlag, New York-Heidelberg,
  1977, Graduate Texts in Mathematics, No. 52.

\bibitem[HP16]{MR3552014}
P.~Habegger and J.~Pila, \emph{O-minimality and certain atypical
  intersections}, Ann. Sci. \'Ec. Norm. Sup\'er. (4) \textbf{49} (2016), no.~4,
  813--858.

\bibitem[HV19]{HubVia}
P.~Hubschmid and E.~Viada, \emph{An addendum to the elliptic torsion anomalous
  conjecture in codimension 2}, Rend. Semin. Mat. Univ. Padova \textbf{141}
  (2019), 209--220.

\bibitem[Mau08]{Maurin}
G.~Maurin, \emph{Courbes alg\'ebriques et \'equations multiplicatives}, Math.
  Ann. \textbf{341} (2008), no.~4, 789--824.

\bibitem[MFK94]{MR1304906}
D.~Mumford, J.~Fogarty, and F.~Kirwan, \emph{Geometric invariant theory}, third
  ed., Ergebnisse der Mathematik und ihrer Grenzgebiete (2) [Results in
  Mathematics and Related Areas (2)], vol.~34, Springer-Verlag, Berlin, 1994.

\bibitem[Mil86]{Milne}
J.~S. Milne, \emph{Abelian varieties}, Arithmetic geometry ({S}torrs, {C}onn.,
  1984), Springer, New York, 1986, pp.~103--150.

\bibitem[Mum70]{Mumford}
D.~Mumford, \emph{Abelian varieties}, Tata Institute of Fundamental Research
  Studies in Mathematics, No. 5, Published for the Tata Institute of
  Fundamental Research, Bombay; Oxford University Press, London, 1970.

\bibitem[Pin05]{PUnpubl}
R.~Pink, \emph{A common generalization of the conjectures of
  {A}ndr\'{e}-{O}ort, {M}anin-{M}umford, and {M}ordell-{L}ang},
  \url{https://people.math.ethz.ch/~pink/ftp/AOMMML.pdf}, April 2005.

\bibitem[Poi01]{Poizat}
B.~Poizat, \emph{L'\'{e}galit\'{e} au cube}, J. Symbolic Logic \textbf{66}
  (2001), no.~4, 1647--1676.

\bibitem[Poo17]{MR3729254}
B.~Poonen, \emph{Rational points on varieties}, Graduate Studies in
  Mathematics, vol. 186, American Mathematical Society, Providence, RI, 2017.

\bibitem[Rat08]{Ratazzi08}
N.~Ratazzi, \emph{Intersection de courbes et de sous-groupes et probl\`emes de
  minoration de hauteur dans les vari\'et\'es ab\'eliennes {C}.{M}}, Ann. Inst.
  Fourier (Grenoble) \textbf{58} (2008), no.~5, 1575--1633.

\bibitem[Ray83]{Raynaud}
M.~Raynaud, \emph{Sous-vari\'{e}t\'{e}s d'une vari\'{e}t\'{e} ab\'{e}lienne et
  points de torsion}, Arithmetic and geometry, {V}ol. {I}, Progr. Math.,
  vol.~35, Birkh\"{a}user Boston, Boston, MA, 1983, pp.~327--352.

\bibitem[R{\'e}m05]{MR2198798}
G.~R{\'e}mond, \emph{Intersection de sous-groupes et de sous-vari\'{e}t\'{e}s.
  {I}}, Math. Ann. \textbf{333} (2005), no.~3, 525--548.

\bibitem[R{\'e}m07]{MR2311666}
\bysame, \emph{Intersection de sous-groupes et de sous-vari\'et\'es. {II}}, J.
  Inst. Math. Jussieu \textbf{6} (2007), no.~2, 317--348.

\bibitem[R{\'e}m09]{MR2534482}
\bysame, \emph{Intersection de sous-groupes et de sous-vari\'et\'es. {III}},
  Comment. Math. Helv. \textbf{84} (2009), no.~4, 835--863.

\bibitem[RV03]{RemVia}
G.~R{\'e}mond and E.~Viada, \emph{Probl\`eme de {M}ordell-{L}ang modulo
  certaines sous-vari\'et\'es ab\'eliennes}, Int. Math. Res. Not. IMRN (2003),
  no.~35, 1915--1931.

\bibitem[Sil92]{MR1154704}
A.~Silverberg, \emph{Fields of definition for homomorphisms of abelian
  varieties}, J. Pure Appl. Algebra \textbf{77} (1992), no.~3, 253--262.

\bibitem[{The}19]{stacks-project}
{The Stacks project authors}, \emph{The stacks project},
  \url{https://stacks.math.columbia.edu}, 2019.

\bibitem[UY11]{UY11}
E.~Ullmo and A.~Yafaev, \emph{A characterization of special subvarieties},
  Mathematika \textbf{57} (2011), no.~2, 263--273.

\bibitem[Via03]{MR1990974}
E.~Viada, \emph{The intersection of a curve with algebraic subgroups in a
  product of elliptic curves}, Ann. Sc. Norm. Super. Pisa Cl. Sci. (5)
  \textbf{2} (2003), no.~1, 47--75.

\bibitem[Via08]{Viada2008}
\bysame, \emph{The intersection of a curve with a union of translated
  codimension-two subgroups in a power of an elliptic curve}, Algebra Number
  Theory \textbf{2} (2008), no.~3, 249--298.

\bibitem[Zan12]{Zannier}
U.~Zannier, \emph{Some {Problems of Unlikely Intersections in Arithmetic and
  Geometry}}, Annals of Mathematics Studies, vol. 181, Princeton University
  Press, 2012, With appendixes by David Masser.

\bibitem[Zil02]{MR1875133}
B.~Zilber, \emph{Exponential sums equations and the {S}chanuel conjecture}, J.
  London Math. Soc. (2) \textbf{65} (2002), no.~1, 27--44.

\end{thebibliography}
\end{document}